\documentclass[12pt]{amsart}
\usepackage{amstext,amsfonts,amssymb,amscd,amsbsy,amsmath,verbatim,tikz}
\usepackage{ifthen,fullpage}
\usepackage{color,tikz}
\usepackage{amsthm}
\usepackage{latexsym}
\usepackage[all]{xy}
\usepackage{enumerate}

\newtheorem{lemma}{Lemma}[section]
\newtheorem{theorem}[lemma]{Theorem}

\newtheorem{prop}[lemma]{Proposition}
\newtheorem{cor}[lemma]{Corollary}

\newtheorem{claim*}{Claim}
\newtheorem{thm}[lemma]{Theorem}
\newtheorem{defn}[lemma]{Definition}
\newtheorem{example}[lemma]{Example}

\theoremstyle{remark}
\newtheorem{remark}[lemma]{Remark}

% Commands

\newcommand{\initial}{\operatorname{in}}

\newcommand{\Spec}{\operatorname{Spec}}

\newcommand{\Div}{\operatorname{Div}}

\newcommand{\Tor}{\operatorname{Tor}}

\newcommand{\Hom}{\operatorname{Hom}} %done

\newcommand{\Gr}{\operatorname{Gr}}

\newcommand{\rank}{\operatorname{rank}}
\newcommand{\codim}{\operatorname{codim}}

\newcommand{\Sym}{\operatorname{Sym}} %done
\newcommand{\GL}{{GL}}

\newcommand{\defi}[1]{\textsf{#1}} % for defined terms

\newcommand{\chr}{\ensuremath{\operatorname{char}}}

\title{A syzygetic approach to the smoothability of zero-dimensional schemes}
\author{Daniel Erman}
\address{Department of Mathematics, University of California,
	Berkeley, CA 94720, USA}
\email{derman@math.berkeley.edu}
\urladdr{http://math.berkeley.edu/\~{}derman}

\author{Mauricio Velasco}
\address{Department of Mathematics, University of California,
	Berkeley, CA 94720, USA}
\email{velasco@math.berkeley.edu}
\urladdr{http://math.berkeley.edu/\~{}velasco}
\thanks{Daniel Erman is partially supported by an NDSEG grant.  Mauricio Velasco is partially supported by NSF grant DMS-0802851.}

\begin{document}
\maketitle

\begin{abstract}
We consider the question of which zero-dimensional schemes deform to a collection of distinct points; equivalently, we ask which Artinian $k$-algebras deform to a product of fields.  We introduce a syzygetic invariant which sheds light on this question for zero-dimensional schemes of regularity two.  This invariant imposes obstructions for smoothability in general, and it completely answers the question of smoothability for certain zero-dimensional schemes of low degree.  The tools of this paper also lead to other results about Hilbert schemes of points, including a characterization of nonsmoothable zero-dimensional schemes of minimal degree in every embedding dimension $d\geq 4$.
\end{abstract}

\section{Introduction}
A fundamental question in the study of zero-dimensional schemes is to determine which $0$-schemes deform to a collection of distinct points\footnote{An equivalent question is to determine which Artinian $k$-algebras deform to $k^n$.}, that is, which $0$-schemes are smoothable  (cf.\  \cite{iarrobino-red}, \cite{iarrobino-small-tangent-space},\cite{fogarty}, \cite{mazzola}, \cite{shafarevich}, \cite{evain}, \cite{cartwright}). For embedding dimension greater than two, very little is known about how to answer this question.  In this paper, we introduce a syzygetic invariant which yields new and remarkably sharp information about this question.  Our invariant imposes necessary conditions for smoothability of $0$-schemes of regularity two, and it completely determines the question of smoothability in low degree.

Previous work on smoothability focuses on tangent space dimension.  Since the dimension of the first order deformation space of a $0$-scheme $\Gamma\subseteq \mathbb A^d$ is upper semicontinuous,  having a ``small tangent space'' poses an obstruction to smoothability.  This notion of a ``small tangent space'' obstruction is introduced and exploited in \cite{iarrobino-small-tangent-space}, where the graded structure of the tangent space is also used to show that a generic homogeneous ideal with Hilbert function $(1,4,3)$ is nonsmoothable.  Shafarevich greatly expanded on these results by a similar ``small tangent space'' obstruction in~\cite{shafarevich}.

Despite the significant results of \cite{iarrobino-small-tangent-space} and \cite{shafarevich}, tangent space dimension is a rather coarse invariant in the study of smoothability.  There exist many possible causes for an increase in the number of first order deformations, and these are not necessarily related to smoothability.  For instance, if a $0$-scheme belongs to the intersection of two irreducible components of the Hilbert scheme, then this $0$-scheme will have a large deformation space, but it may not be smoothable (cf.\   Example~\ref{examples} \eqref{ex:intersect}).

The invariant introduced below imposes obstructions to smoothability for homogeneous $0$-schemes of regularity two and, in some cases, provides even richer information.  For instance, our completely answers the question of smoothability for certain $0$-schemes of low degree (cf.\  Theorem~\ref{thm:genwaldo}).

\subsection{The $\kappa$-vector and obstructions to smoothability}
The invariant introduced in this paper is called the \defi{$\kappa$-vector} of a homogeneous ideal.   We work over an algebraically closed field $k$ with $\chr(k)\ne 2,3$.  We say that a $0$-scheme $\Gamma$ has regularity two if $H^0(\Gamma,\mathcal O_\Gamma)$ is a local ring whose maximal ideal $\mathfrak m$ satisfies $\mathfrak m^3=0$.  In the case that $H^0(\Gamma,\mathcal O_\Gamma)$ is a graded quotient of the standard graded polynomial ring, this notion of regularity coincides with the familiar notion of Castelnuovo-Mumford regularity.  Note that every $0$-scheme of regularity two is irreducible but not reduced.  Since $0$-schemes of regularity two are one of the simplest classes containing infinitely many distinct isomorphism types (cf. \cite[p.\ 1335]{shafarevich} or \cite[Lemma 1.2(3)]{poonen}), it is natural to focus on these families.

Every $0$-scheme of regularity two and embedding dimension $d$ admits an embedding $\Gamma \subseteq \mathbb A^d$ such that $\Gamma$ is represented by a homogeneous ideal $I\subseteq S:=k[x_1, \dots, x_d]$.  Conversely, every embedding of $\Gamma$ into $\mathbb A^d$ is, up to translation, defined by a homogeneous ideal $I$.  Note that the deformation theory of an embedded $0$-scheme is smooth over the abstract deformation theory of the $0$-scheme~\cite[p.\ 4]{artin}. Hence we may fix such an embedding of $\Gamma$ without affecting its deformation theoretic properties.  Let $e=\deg(\Gamma)-d-1$ and let $I_2^\perp \in \Gr(e,S_2^*)$ be the degree two part of the (Macaulay) inverse system of the ideal $I$.  Choose a basis $q_1, \dots, q_e$ of $I_2^{\perp}$ and represent these elements by $d\times d$-symmetric matrices $A_1, \dots, A_e$.  We define $\kappa_j(I)$ to be the rank of the following linear map induced by $\mathbf{A}=(A_1, \dots, A_e)\in S_2^*\otimes I_2^\perp$ (see \S\ref{sec:Kvectors} for a more detailed discussion and for an equivalent definition via syzygies):
\[ \xymatrix{
S_1\otimes \bigwedge^{j}\left( I_2^{\perp}\right) \ar[r]^{\wedge \mathbf{A} \quad} & S_1^*\otimes \bigwedge^{j+1}\left( I_2^{\perp}\right).\\
}
\]
More concretely, when $e=3$, the numbers $\kappa_0(I), \kappa_1(I), \kappa_2(I)$ are the ranks of the matrices appearing in the following sequence:
$$
\xymatrix@+3pc{
k^d \ar[r]^{\left(\begin{smallmatrix} A_1\\A_2\\A_3 \end{smallmatrix}\right)} &
k^{3d} \ar[r]^{\left(\begin{smallmatrix} 0&A_3&-A_2\\ -A_3 & 0 & A_1 \\ A_2&-A_1&0 \end{smallmatrix}\right)} &
k^{3d} \ar[r]^{\left(\begin{smallmatrix} A_1&A_2&A_3 \end{smallmatrix}\right)} &
k^d.
}
$$

Variations of $\kappa_1$ have appeared in several geometric settings.  Namely, when $e=3$ the invariant $\kappa_1$ is determined by the Strassen equation, and it was previously studied in connection with secant varieties~\cite{ottaviani},~\cite[p.\ 14]{abo}, vector bundles~\cite{ottaviani}, and polynomial versions of Waring's problem~\cite[p.\ 513]{carlini}.

\begin{defn}
The $\kappa$-vector of $I$ is the sequence $\kappa(I)=(\kappa_0(I), \dots, \kappa_{e-1}(I))$.
\end{defn}
The $\kappa$-vector of $I$ is independent of the choice of basis of $I_2^\perp$ and is invariant under the $GL(d)$-action on $S_2^*$.  Further, the action of $GL(d)$ is transitive on the homogeneous ideals $I$ which define some embedding of $\Gamma \subseteq \mathbb A^d$. Hence each $\kappa_j(I)$ is in fact an invariant of $\Gamma$ itself, and we thus refer to $\kappa_j(\Gamma)$ and $\kappa_j(I)$ interchangeably.  The lower semincontinuity of $\kappa_j$ induces obstructions to the existence of deformations among algebras.

In Proposition \ref{prop:genKvec}, we compute the values of the $\kappa$-vector for generic ideals and generic smoothable ideals with a given embedding dimension and degree.  This computation motivates the introduction of the following two conditions:
\begin{equation*}\label{eqn:nec:1}
\kappa_j(\Gamma)\leq (d+e)\binom{e-1}{j}\tag{*}
\end{equation*}
for $j=1, \dots, e-1$ and
\begin{equation*}\label{eqn:nec:j}
\kappa_1(\Gamma)\leq (e-1)d+\binom{e}{2}.\tag{**}
\end{equation*}

\begin{thm}\label{thm:1denonsmoothable}
Let $\Gamma\subseteq \mathbb A^d$ be a $0$-scheme of regularity two, embedding dimension $d$, and degree $1+d+e$.  Then \eqref{eqn:nec:1} and \eqref{eqn:nec:j} are necessary conditions for the smoothability of $\Gamma$.
\end{thm}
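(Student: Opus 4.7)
The plan is to obtain Theorem~\ref{thm:1denonsmoothable} as a formal consequence of Proposition~\ref{prop:genKvec} together with the lower semicontinuity of each $\kappa_j$.

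First I would verify the lower semicontinuity. Each $\kappa_j(I)$ depends only on $I_2^\perp\in\operatorname{Gr}(e,S_2^*)$ and is defined as the rank of a matrix whose entries are polynomial in the Pl\"ucker coordinates of $I_2^\perp$. Hence $\kappa_j$ is lower semicontinuous on $\operatorname{Gr}(e,S_2^*)$, and pulling back along $\Gamma\mapsto I_2^\perp(\Gamma)$ makes it lower semicontinuous on the parameter space $H_2$ of regularity-two 0-schemes of embedding dimension $d$ and degree $1+d+e$. Consequently, for any family of such schemes over an irreducible base, the value of $\kappa_j$ on a special fiber is at most the value at a generic fiber.

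Next, let $\Gamma$ be a smoothable 0-scheme of the stated invariants, and let $W$ be an irreducible component of the locus of smoothable regularity-two 0-schemes in $H_2$ that contains $\Gamma$. By the semicontinuity above, $\kappa_j(\Gamma)\leq\kappa_j(\Gamma')$ for $\Gamma'$ a generic point of $W$. Since $\Gamma'$ is a generic smoothable 0-scheme of the stated invariants, Proposition~\ref{prop:genKvec} will supply the bounds $\kappa_j(\Gamma')\leq(d+e)\binom{e-1}{j}$ and $\kappa_1(\Gamma')\leq(e-1)d+\binom{e}{2}$. Combining these inequalities with the semicontinuity step yields conditions \eqref{eqn:nec:1} and \eqref{eqn:nec:j} for $\Gamma$.

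The real content of the argument is therefore in Proposition~\ref{prop:genKvec}, whose proof I expect to proceed by constructing an explicit smoothing---a flat degeneration of $1+d+e$ distinct points of $\mathbb{A}^d$ to a chosen regularity-two scheme---for which the inverse system $I_2^\perp$ and hence the linear maps $\wedge\mathbf{A}$ can be written down in coordinates and their ranks computed. The main obstacle will be organizing that rank computation (and verifying that the generic point of \emph{every} component of the smoothable locus in $H_2$ is accounted for); once the generic values are pinned down, the theorem above is an immediate application of semicontinuity.
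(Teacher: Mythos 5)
Your proof is correct and takes essentially the same route as the paper: combine the bounds of Proposition~\ref{prop:genKvec} part~(2) with the lower semicontinuity of each $\kappa_j$ (Lemma~\ref{lem: inv}). The concern you flag about multiple components of the smoothable locus is resolved in the paper by Proposition~\ref{prop:ratlmap}, which shows that this locus in $\operatorname{Gr}(e,S_2^*)$ is the image of a rational map from the irreducible variety $R^d_{1+d+e}$ and hence irreducible, so there is a single generic smoothable $(1,d,e)$-ideal to which every smoothable $\Gamma$ specializes.
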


\noindent
The above obstructions are nontrivial when $e\geq 3$ (c.f Example~\ref{examples}~\eqref{ex:family}).

\begin{figure}\label{fig:5125}
\begin{center}
\begin{tikzpicture}
\draw (0,1.5) node {If $\kappa(\Gamma)\leq (5,12,5),$ then};
\draw (0,1) node {the $0$-scheme $\Gamma$ \dots};
\filldraw (0,0) circle (3pt);
\draw[very thick] (-.3,-.3) -- (.3,.3);
\draw[very thick] (-.2,-.4) -- (.2,.4);
\draw[very thick] (-.1,-.45) -- (.1,.45);
\draw[very thick] (0,-.5) -- (.0,.5);
\draw[very thick] (-.3,.3) -- (.3,-.3);
\draw[very thick] (-.2,.4) -- (.2,-.4);
\draw[very thick] (-.1,.45) -- (.1,-.45);
\draw[very thick] (0,.5) -- (.0,-.5);
\draw[very thick] (.3,-.3) -- (-.3,.3);
\draw[very thick] (-.4,-.2) -- (.4,.2);
\draw[very thick] (-.45,-.1) -- (.45,.1);
\draw[very thick] (-.5,0) -- (.5,0);
\draw[very thick] (.4,-.2) -- (-.4,.2);
\draw[very thick] (.45,-.1) -- (-.45,.1);
\draw[very thick] (.5,0) -- (-.5,0);

\draw[thick] (2,0) .. controls (2.3,.3) and (2.6,.3) .. (3,0);
\draw[thick] (3,0) .. controls (3.3,-.3) and (3.6,-.3) .. (4,0);
\draw[thick, ->] (4,0) .. controls (4.3,.3) and (4.6,.3) .. (5,0);

\draw (7,1.5) node {\dots deforms to $9$ distinct points.};
\filldraw (7,0) circle (1.5pt);
\filldraw (7.2,.4) circle (1.5pt);
\filldraw (7.31,-.42) circle (1.5pt);
\filldraw (6.79,.62) circle (1.5pt);
\filldraw (6.6,.1) circle (1.5pt);
\filldraw (6.3,.2) circle (1.5pt);
\filldraw (7.33,.66) circle (1.5pt);
\filldraw (6.63,-.2) circle (1.5pt);
\filldraw (6.5,-.5) circle (1.5pt);
\end{tikzpicture}
\\
\vspace{1cm}
\begin{tikzpicture}
\draw (0,1.5) node {If $\kappa(\Gamma)\leq (4,12,4),$ then\dots };
%\draw (0,1) node {the $0$-scheme $\Gamma$ \dots};
\filldraw (0,0) circle (3pt);
\draw[very thick] (-.3,-.3) -- (.3,.3);
\draw[very thick] (-.2,-.4) -- (.2,.4);
\draw[very thick] (-.1,-.45) -- (.1,.45);
\draw[very thick] (0,-.5) -- (.0,.5);
\draw[very thick] (-.3,.3) -- (.3,-.3);
\draw[very thick] (-.2,.4) -- (.2,-.4);
\draw[very thick] (-.1,.45) -- (.1,-.45);
\draw[very thick] (0,.5) -- (.0,-.5);
\draw[very thick] (.3,-.3) -- (-.3,.3);
\draw[very thick] (-.4,-.2) -- (.4,.2);
\draw[very thick] (-.45,-.1) -- (.45,.1);
\draw[very thick] (-.5,0) -- (.5,0);
\draw[very thick] (.4,-.2) -- (-.4,.2);
\draw[very thick] (.45,-.1) -- (-.45,.1);
\draw[very thick] (.5,0) -- (-.5,0);

\draw[thick] (2,0) .. controls (2.3,.3) and (2.6,.3) .. (3,0);
\draw[thick] (3,0) .. controls (3.3,-.3) and (3.6,-.3) .. (4,0);
\draw[thick, ->] (4,0) .. controls (4.3,.3) and (4.6,.3) .. (5,0);

\draw (7,1.5) node {\dots we can peel a single};
\draw(7,1) node {point off of $\Gamma$.};
\filldraw (7,0) circle (3pt);
\draw[very thick] (6.8,-.25) -- (7.3,.25);
\draw[very thick] (6.85,-.35) -- (7.15,.35);
\draw[very thick] (6.95,-.4) -- (7.05,.4);
\draw[very thick] (7,-.45) -- (7,.45);
\draw[very thick] (6.75,.25) -- (7.25,-.25);
\draw[very thick] (6.85,.35) -- (7.15,-.35);
\draw[very thick] (6.9,.45) -- (7.1,-.45);
\draw[very thick] (7,.45) -- (7.0,-.45);
\draw[very thick] (7.25,-.25) -- (6.75,.25);
\draw[very thick] (6.65,-.15) -- (7.3,.15);
\draw[very thick] (6.6,-.05) -- (7.4,.05);
\draw[very thick] (6.55,0) -- (7.45,0);
\draw[very thick] (7.35,-.15) -- (6.65,.15);
\draw[very thick] (7.45,0) -- (6.55,0);
\filldraw (8,0) circle (1.5pt);
\end{tikzpicture}
\end{center}
\caption{In Example~\ref{examples}\eqref{ex:10pts} we consider a $0$-scheme $\Gamma$ of regularity two, embedding dimension $5$, and degree $9$.  The example illustrates how the $\kappa$-vector contains rich information about the deformations of $\Gamma$.}
\end{figure}
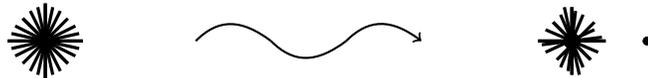

\subsection{Minimal examples and sufficient conditions for smoothability}
We next consider minimal examples of nonsmoothable $0$-schemes.  The results of \cite{cartwright} imply that every $0$-scheme of degree at most $7$ is smoothable and that every nonsmoothable $0$-scheme of degree $8$ embeds into $\mathbb A^4$ and is, up to translation, defined by a homogeneous ideal $I\subseteq k[x_1, \dots, x_4]$ with Hilbert function $(1,4,3)$.  We extend this result by characterizing, for any $d\geq 4$, the minimal degree nonsmoothable $0$-schemes of embedding dimension $d$.  The proof of the following theorem relies heavily on results from~\cite{cartwright}.

\begin{thm}\label{thm:minexamples}
For $d\geq 4,$ let $\Gamma \subseteq \mathbb A^d$ be a minimal degree subscheme of $\mathbb A^d$ which is not smoothable and which cannot be embedded in $\mathbb A^{d-1}$.  Then, up to translation, $\Gamma$ is defined by a homogeneous ideal $I\subseteq S$ with Hilbert function $(1,d,3)$.  Moreover, any sufficiently generic homogeneous ideal with Hilbert function $(1,d,3)$ defines a nonsmoothable $0$-scheme.
\end{thm}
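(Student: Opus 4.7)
My plan is to treat the two assertions of the theorem separately. For the second assertion---that a sufficiently generic homogeneous ideal with Hilbert function $(1,d,3)$ defines a nonsmoothable $0$-scheme---I apply Theorem~\ref{thm:1denonsmoothable}. When the Hilbert function is $(1,d,3)$ we have $e=3$, so the relevant object is the middle $3d\times 3d$ block matrix $B$ displayed in the introduction. Since each $A_i$ is symmetric, a direct block-by-block check shows that $B$ is antisymmetric as an overall $3d\times 3d$ matrix, and hence $\rank B$ is even and bounded above by $3d$. For a generic triple $(A_1,A_2,A_3)\in(\Sym^2 k^d)^{3}$ I would verify by exhibiting an explicit example that this bound is attained, so $\rank B = 3d$ when $d$ is even and $\rank B = 3d-1$ when $d$ is odd. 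In both cases $\rank B$ strictly exceeds $2d+3$ for every $d\geq 4$, so condition~\eqref{eqn:nec:j} fails for the generic ideal, and Theorem~\ref{thm:1denonsmoothable} produces the desired nonsmoothability.

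For the first assertion I would induct on $d$, taking the base case $d=4$ from \cite{cartwright}. Let $\Gamma\subseteq \mathbb A^d$ be nonsmoothable of embedding dimension exactly $d$ and of minimal degree; by the second assertion $\deg\Gamma\leq d+4$. First I would reduce to the case $\Gamma$ is local. If $\Gamma$ were disconnected, some component $\Gamma_1$ must carry the full embedding dimension $d$; if $\Gamma_1$ itself were nonsmoothable then $\Gamma_1$ would be a nonsmoothable example in $\mathbb A^d$ of strictly smaller degree, contradicting minimality, while otherwise some other component is nonsmoothable of embedding dimension $d'<d$ with $d'\geq 4$ (since schemes of embedding dimension at most $3$ are smoothable), and then the inductive hypothesis combined with $\deg\Gamma_1\geq d+1$ yields $\deg\Gamma\geq (d+1)+(d'+4)\geq d+9$, again a contradiction. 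Hence $\Gamma$ is local.

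Writing the Hilbert function of $\Gamma$ as $(1,d,h_2,h_3,\dots)$, the degree bound forces $h_2\leq 3$. The case $h_2=0$ gives $\mathfrak m^2=0$, which is classically smoothable. The case $h_2=1$ forces the Hilbert function to be $(1,d,1,\dots,1)$ by Macaulay's bound, and such almost-stretched algebras are smoothable by an explicit curvilinear deformation. For $h_2=2$ only a handful of tails $(1,d,2,\dots)$ are compatible with the degree bound, and each case can be smoothed explicitly by first placing the two generators of $I_2^\perp$ into simultaneous normal form and then reducing to a low-embedding-dimension analysis as carried out in \cite{cartwright}. This leaves $h_2=3$, and combined with the degree bound this forces the full Hilbert function to be $(1,d,3)$. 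The main obstacle of the argument is the $h_2=2$ case: classifying Artinian local algebras with Hilbert function $(1,d,2,h_3,\dots)$ and producing explicit smoothings uniformly in $d$ is more delicate than the antisymmetric rank computation that drives the existence half.
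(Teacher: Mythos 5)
Your existence half is essentially the paper's own argument: for $e=3$ the middle map $\psi_1$ is skew-symmetric, a generic triple of symmetric matrices attains the maximal rank $3d$ (resp.\ $3d-1$ for $d$ odd), this exceeds the bound $2d+3$ of \eqref{eqn:nec:j} for all $d\geq 4$, and Theorem~\ref{thm:1denonsmoothable} then gives nonsmoothability; the explicit example you defer to is exactly what Proposition~\ref{prop:genKvec} part (1) supplies (generic diagonal and antidiagonal quadrics). No complaint there beyond the deferral.

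The gap is in the classification half. Its entire content is the smoothability of every local algebra of embedding dimension $d$ and degree at most $d+4$ whose associated graded ring has Hilbert function other than $(1,d,3)$, namely $(1,d)$, $(1,d,1)$, $(1,d,1,1)$, $(1,d,1,1,1)$, $(1,d,2)$, $(1,d,2,1)$, and your proposal does not establish this. For $h_2=1$ you assert an ``explicit curvilinear deformation'' without constructing it (note that when the socle degree is at least $3$ the ideal need not be homogeneous), and for $h_2=2$ you yourself flag the case as the main obstacle and only claim the cases ``can be smoothed explicitly''; moreover your plan of normalizing ``the two generators of $I_2^\perp$'' presupposes homogeneity, which can fail for the tail $(1,d,2,1)$. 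The paper closes precisely these cases by invoking Propositions 4.12--4.15 of \cite{cartwright}, which prove smoothability for all of these associated-graded Hilbert functions for every $d$, so no induction on $d$ and no new uniform-in-$d$ classification is required; without those results (or a genuine substitute for them) your argument is incomplete. Separately, your reduction to the local case uses the statement that every $0$-scheme of embedding dimension at most $3$ is smoothable, which is false: Iarrobino's reducibility of the Hilbert scheme of points in $\mathbb A^3$ \cite{iarrobino-red} gives nonsmoothable schemes of embedding dimension $3$. The step is easily repaired in context, since $\deg\Gamma\leq d+4$ and the component of full embedding dimension has degree at least $d+1$, so the remaining components have total degree at most $3$ and are smoothable (indeed every scheme of degree at most $7$ is, by \cite{cartwright}); but as written the justification is wrong, and with this repair the induction on $d$ becomes unnecessary. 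Finally, you should also record why the minimal example is, up to translation, cut out by a homogeneous ideal: the Hilbert function $(1,d,3)$ of the associated graded ring forces $\mathfrak m^3=0$, whence the ideal coincides with its own initial ideal.
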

When considering a family whose generic element is nonsmoothable, it is natural to seek sufficient conditions for the smoothability of a specific member of the family.  To the authors' knowledge, the only previously known set of nontrivial sufficient conditions for smoothability of a $0$-scheme is given in \cite[Thm.\ 1.3]{cartwright}, where the case of Hilbert function $(1,4,3)$ is considered.  By focusing on the $\kappa$-vector, we extend \cite[Thm.\ 1.3]{cartwright} to new families of $0$-schemes.
\begin{thm}\label{thm:genwaldo}
Assume that $\text{char}(k)=0$.  As in Theorem~\ref{thm:minexamples}, let $\Gamma\subseteq \mathbb A^d$ be a $0$-scheme of regularity two, embedding dimension $d$ and degree $d+4$.  In the following cases, \eqref{eqn:nec:1} and \eqref{eqn:nec:j} are sufficient conditions for the smoothability of $\Gamma$:
\begin{enumerate}
    \item  When $d\leq 8$.
    \item  When $d\leq 11$ and $I_2^\perp$ contains a nonsingular quadric.
\end{enumerate}
If $d\geq 12,$ then conditions \eqref{eqn:nec:1} and \eqref{eqn:nec:j} are not sufficient to guarantee the smoothability of $\Gamma$.
\end{thm}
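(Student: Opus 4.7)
Since $\deg(\Gamma) = d+4$, we have $e = 3$ and $I_2^\perp$ is a net of quadrics, represented by a triple of symmetric matrices $(A_1, A_2, A_3)$. The base case of the argument is $d=4$, supplied by \cite[Thm.\ 1.3]{cartwright}.

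\textbf{Plan for (1).} I would argue by induction on $d$. Given $\Gamma \subseteq \mathbb{A}^d$ satisfying \eqref{eqn:nec:1} and \eqref{eqn:nec:j} with $d\le 8$, the goal is to produce a flat deformation $\Gamma \rightsquigarrow \Gamma' \sqcup \{p\}$ in which $p$ is a reduced point and $\Gamma'$ either has strictly smaller embedding dimension or has socle dimension $e\le 2$. The conditions \eqref{eqn:nec:1}, \eqref{eqn:nec:j} constrain the syzygies of $\mathbf{A}=(A_1,A_2,A_3)$ enough to force a geometric splitting of this kind: concretely, they should yield a hyperplane $H \subseteq \mathbb{A}^d$ such that, after a suitable translation, $\Gamma$ breaks as a scheme supported in $H$ of embedding dimension at most $d-1$, together with a length-one residue moving off $H$. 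A $\kappa$-vector bookkeeping argument is needed to ensure $\Gamma'$ continues to satisfy the inductive hypothesis. Induction plus the base case dispatches $4 \le d \le 8$; the side case where $\Gamma'$ has $e\le 2$ corresponds to a pencil of quadrics, which is classically smoothable via Kronecker--Weierstrass normal forms.

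\textbf{Plan for (2).} Assume $I_2^\perp$ contains a nonsingular quadric. Choosing a basis with $A_1$ invertible, the pair $(B, C) = (A_1^{-1}A_2,\, A_1^{-1}A_3)$ of endomorphisms of $k^d$ determines $\Gamma$ up to the natural equivalence, and the Kronecker--Weierstrass classification provides normal forms for such matrix pairs. For $d\le 11$, I would enumerate the orbits compatible with \eqref{eqn:nec:1} and \eqref{eqn:nec:j} and, for each, exhibit an explicit smoothing by deforming $(B,C)$ to a simultaneously diagonalizable pair; this corresponds to a direct sum of $1$-dimensional local algebras, which is trivially smoothable. The cutoff $d=11$ is essentially tight because at $d=12$ a new indecomposable block type becomes available and is incompatible with any such deformation. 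For the $d\ge 12$ counterexample, I would take this borderline normal form, realize it as an explicit net of quadrics in $12$ variables defining $\Gamma_0 \subseteq \mathbb{A}^{12}$ of Hilbert function $(1,12,3)$ satisfying \eqref{eqn:nec:1} and \eqref{eqn:nec:j}, and certify non-smoothability by a small-tangent-space argument of the type used in \cite{shafarevich}. For $d > 12$, embed $\Gamma_0$ into $\mathbb{A}^d$ while retaining the obstruction.

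\textbf{Main obstacle.} The crux is the peeling lemma in (1), which requires converting the syzygetic bounds \eqref{eqn:nec:1} and \eqref{eqn:nec:j} into a concrete geometric decomposition and then tracking the $\kappa$-vector through the deformation, and the borderline analysis $d=11 \to 12$ in (2), which requires a complete enough enumeration of Kronecker orbits to isolate precisely the orbit that fails to smooth. The counterexample for $d\ge 12$ is then dictated by this single failing orbit.
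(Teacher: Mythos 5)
Your proposal does not follow the paper's argument, and both of your main tools contain gaps that I do not see how to repair.

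\textbf{The paper's actual approach.} The paper does not classify orbits or peel off points inductively. Instead it is a dimension count. It defines the $\kappa$-cycle $\Xi(d,2d+2,d)\subseteq\Gr(3,S_2^*)$ and shows: (a) the smoothable locus $\mathcal Z$ of $(1,d,3)$-ideals satisfies $\mathcal Z\subseteq\Xi(d,2d+2,d)$ by the necessity result Theorem~\ref{thm:1denonsmoothable}; (b) for $d\le 11$, an explicit Jacobian/fiber-dimension computation (Lemma~\ref{lem:M2}, carried out in Macaulay2) shows $\codim(\mathcal Z,\Gr(3,S_2^*))\le\binom{d-2}{2}$; and (c) $\Xi(d,2d+2,d)$ is irreducible of codimension exactly $\binom{d-2}{2}$, proved on the complement of the purely-singular locus $P$ via the commutator variety $\{\rank(BC-CB)\le 2\}$ (Proposition~\ref{prop:genwaldoKcycles}), and then extended across $P$ for $d\le 8$ by a Gr\"obner degeneration argument (Corollary~\ref{cor:genwaldoCor}). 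Irreducibility plus the matching codimension forces $\mathcal Z=\Xi(d,2d+2,d)$. The $d\ge 12$ failure is again dimension counting: $\dim\mathcal Z<d^2+3d$ because translating $\mathcal Z$ around $\mathbb A^d$ injects into $R^d_{d+4}$ without surjecting, while $\dim\Xi(d,2d+2,d)\ge d^2+3d$ once $d\ge 12$, so the inclusion is strict.

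\textbf{Gap in your part (1): peeling.} Your peeling mechanism corresponds to Theorem~\ref{thm:hop} in the paper, but that result requires $\kappa_0(I)\le d'<d$ in order to peel off a point and drop the embedding dimension. Conditions \eqref{eqn:nec:1} and \eqref{eqn:nec:j} say nothing of the sort: the generic smoothable $(1,d,3)$-ideal has $\kappa_0=d$ (its $I_2^\perp$ contains a nonsingular quadric by Proposition~\ref{prop:smoothparam}), yet it is smoothable. So there is no hyperplane $H$ and no length-one residue to split off, and the induction never starts. A $\kappa$-vector bookkeeping step cannot rescue this because the invariant you would need to decrease ($\kappa_0$) is already maximal on the very ideals you are trying to smooth.

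\textbf{Gap in your part (2): Kronecker--Weierstrass.} Kronecker--Weierstrass classifies \emph{pencils} of matrices (or pairs up to the natural left-right action). Here $I_2^\perp$ is a three-dimensional space of symmetric matrices, i.e.\ a net of quadrics, and even after normalizing $A_1=\mathrm{Id}$ you are left with the simultaneous-conjugation classification of a pair $(B,C)$ of (self-adjoint) endomorphisms, which is a wild problem: there is no finite list of normal forms and no bound on the number of orbits even in fixed dimension. Consequently the plan to "enumerate the orbits compatible with \eqref{eqn:nec:1} and \eqref{eqn:nec:j}'' for $d\le 11$ is not feasible, and the claim that at $d=12$ "a new indecomposable block type becomes available'' misdescribes the phenomenon: the threshold $d=12$ in the paper is a pure codimension crossover between $\Xi(d,2d+2,d)$ and $\mathcal Z$, not the appearance of a distinguished orbit. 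Your proposed $d\ge 12$ counterexample via a small-tangent-space obstruction would also need verification that the candidate ideal actually satisfies \eqref{eqn:nec:1} and \eqref{eqn:nec:j}; the paper sidesteps exhibiting an explicit ideal by arguing non-constructively that $\mathcal Z\subsetneq\Xi(d,2d+2,d)$.
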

%\noindent We prove parts (1) and (2) of Theorem~\ref{thm:genwaldo} without explicitly constructing smoothings.
%Instead, we isolate an irreducible component of the subset of the grassmanian $\Gr(3,S_2^*)$ with $\kappa(I)\leq (d,2d+2,d)$, and we show that this component contains all smoothable $(1,d,3)$-ideals.  By bounding the dimension of the set of smoothable $(1,d,3)$-ideals from below, and by bounding the dimension of this component from above, we then show that the two subsets coincide.

\begin{remark}
The inequalities \eqref{eqn:nec:1} and \eqref{eqn:nec:j} may be viewed as inducing determinantal equations for the intersection of components of the Hilbert scheme of points.  For instance, the set of homogeneous ideals $I\subseteq S$ with Hilbert function $(1,d,3)$ is parametrized by the Grassmanian $\Gr(3,S_2^*)$.  Theorem~\ref{thm:genwaldo} shows that, when $d\leq 8$, the determinantal equations induced by $\kappa_1$ precisely cut out the intersection of $\Gr(3,S_2^*)$ with the smoothable component of the Hilbert scheme of $d+4$ points in $\mathbb A^d$.

When $d=4$, there is an interesting comparison with the tangent space obstruction.  In this case, $\kappa_1$ determines a single equation $P$ on $\Gr(3,S_2^*)$.  As illustrated in \cite{cartwright}, the tangent space obstruction of \cite{iarrobino-small-tangent-space} also induces a determinantal equation $Q$ on $\Gr(3,S_2^*)$, and \cite[Lemma 5.17]{cartwright} shows that $P$ is irreducible and that $Q=P^8$.
\end{remark}

\subsection{Further results on Hilbert schemes of points}
The tools introduced in this paper lead to other new results about Hilbert schemes of points.  The results below are phrased for the Hilbert scheme of points in $\mathbb A^d$, but the obvious analogues also hold for the Hilbert scheme of points in $\mathbb P^d$.

\begin{prop}\label{prop:codim1}
Let $R^{d}_{n}$ be the smoothable component of the Hilbert scheme of $n$ points in $\mathbb{A}^{d}$ with $d\geq 11$ and $n\geq 15$.  There exists a closed subset $Z\subseteq R^{d}_{n}$ of codimension $1$ such that every point of $Z$ is a singular point of the Hilbert scheme.
\end{prop}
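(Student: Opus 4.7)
The plan is to exhibit a divisor $Z\subseteq R^d_n$ consisting of schemes $\Gamma$ that lie on a second irreducible component of $\Hilb(\mathbb A^d,n)$, forcing every $\Gamma\in Z$ to be a singular point of the Hilbert scheme. The construction exploits the nonsmoothable $0$-schemes of regularity two produced by Theorem~\ref{thm:minexamples}, with dimensions calibrated via the sharp smoothability criterion of Theorem~\ref{thm:genwaldo}(2) at the borderline value $d_0=11$.

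I would first treat the base case $(d,n)=(11,15)$. By Theorem~\ref{thm:minexamples}, the locus $\mathcal H_{11}\subseteq \Hilb(\mathbb A^{11},15)$ of ideals with Hilbert function $(1,11,3)$ is irreducible of dimension $11+3(\binom{12}{2}-3)=200$, and its generic member is nonsmoothable. Since $200>165=\dim R^{11}_{15}$, the closure $\overline{\mathcal H_{11}}$ is contained in an irreducible component of the Hilbert scheme distinct from $R^{11}_{15}$, so every element of $\mathcal H_{11}^{sm}:=\mathcal H_{11}\cap R^{11}_{15}$ is automatically a singular point of $\Hilb$. Applying Theorem~\ref{thm:genwaldo}(2) at the borderline case $d=11$, together with Proposition~\ref{prop:genKvec}, I would identify $\mathcal H_{11}^{sm}$ (on the dense open subset where $I_2^\perp$ contains a nonsingular quadric) with the determinantal subvariety of $\mathcal H_{11}$ cut out by \eqref{eqn:nec:1} and \eqref{eqn:nec:j}, and show that this locus has codimension exactly $1$ in $R^{11}_{15}$, i.e., $\dim \mathcal H_{11}^{sm}=164$.

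To extend to arbitrary $(d,n)$ with $d\geq 11$ and $n\geq 15$, I would define $Z\subseteq R^d_n$ as the closure of the family of schemes $\Gamma=\Gamma_0\sqcup\{p_1,\dots,p_{n-15}\}$, where $\Gamma_0\in\mathcal H_{11}^{sm}$ is embedded into $\mathbb A^d$ via a translation point $p\in\mathbb A^d$ and an $11$-dimensional subspace $V\subseteq T_p\mathbb A^d$, and the $p_i\in\mathbb A^d\setminus\{p\}$ are distinct reduced points. Each such $\Gamma$ lies on both $R^d_n$ and the embedded image of the nonsmoothable component, hence is a singular point of $\Hilb(\mathbb A^d,n)$. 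A direct parameter count gives
\[
\dim Z = (\dim \mathcal H_{11}^{sm}-11)+d+11(d-11)+d(n-15) = dn-3d+32,
\]
so the codimension of $Z$ in $R^d_n$ equals $3d-32$, which is $1$ precisely for $d=11$. For $d\geq 12$ with $n\geq d+4$, one substitutes the analogous construction based on $\mathcal H_d^{sm}\subseteq R^d_{d+4}$: Theorem~\ref{thm:minexamples} again produces a distinct irreducible component of $\Hilb(\mathbb A^d,d+4)$, and the expected sharp estimate $\dim \mathcal H_d^{sm}=d(d+4)-1$ then yields codimension $1$ in $R^d_n$ after appending $n-d-4$ reduced points. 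The remaining cases $d\geq 12$ with $15\leq n<d+4$ are handled by a finer stratification involving Hilbert-function strata of the form $(1,d_0,3,\dots)$ inside $R^d_n$ for an appropriate $d_0\leq n-4$.

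The main obstacle is the sharp dimension equality $\dim \mathcal H_{d_0}^{sm}=d_0(d_0+4)-1$ for $d_0\geq 11$. The upper bound is immediate from Theorem~\ref{thm:minexamples}, since $\mathcal H_{d_0}\not\subseteq R^{d_0}_{d_0+4}$ forces $\mathcal H_{d_0}^{sm}$ to be a proper closed subset of $R^{d_0}_{d_0+4}$; but the matching lower bound requires exhibiting an explicit $(d_0(d_0+4)-1)$-dimensional family of smoothable $(1,d_0,3)$-schemes, which is precisely what the determinantal equations of Theorem~\ref{thm:genwaldo}(2) and the associated $\kappa$-vector analysis supply.
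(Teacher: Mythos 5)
Your base case $(d,n)=(11,15)$ follows the paper's strategy and is essentially correct: one intersects the locus of $(1,11,3)$-ideals (plus translations) with $R^{11}_{15}$ and shows, using Lemma~\ref{lem:M2} for the lower bound and Proposition~\ref{prop:genwaldoKcycles} for irreducibility and the upper bound, that the intersection has dimension $153+11=164=\dim R^{11}_{15}-1$.

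The gap is in the extension to $d>11$. Your parameter count
\[
(\dim\mathcal H_{11}^{sm}-11)+d+11(d-11)+d(n-15)=dn-3d+32
\]
records only a choice of $11$-plane $V\subseteq\langle x_1,\dots,x_d\rangle$, a translation, and $n-15$ reduced points. This describes only the ``linear'' embeddings of the punctual piece, and for $d>11$ it yields codimension $3d-32=3(d-11)+1>1$ --- you are short exactly $3(d-11)$ parameters. The missing degrees of freedom are the perturbations of the complementary linear forms: if $l_1,\dots,l_{d-11}$ is a basis of $\langle x_1,\dots,x_d\rangle/V$ and $p_1,p_2,p_3$ is a basis of $\Sym_2(V)/I_2$, then the ideals $I+\langle l_i+\textstyle\sum_j\lambda_{ij}p_j\rangle$ (for $\lambda_{ij}\in k$) all cut out $0$-schemes abstractly isomorphic to a fixed element of the base family, but they sweep out an extra $3(d-11)$ dimensions inside the Hilbert scheme. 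This is exactly the construction in the paper, which, using~\cite[p.~4]{artin} to transfer singularity of the abstract deformation functor to singularity of any embedded deformation, produces a locus $\widetilde{Z}_{n,d}$ of dimension $(14d-1)+d+(n-15)d=nd-1$ for \emph{all} $d\geq 11$, $n\geq 15$ in one stroke, with the nonsmoothable core always of embedding dimension $11$.

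Your fallback for $d\geq 12$ --- replacing $\mathcal H_{11}^{sm}$ by $\mathcal H_d^{sm}\subseteq R^d_{d+4}$ and asserting $\dim\mathcal H_d^{sm}=d(d+4)-1$ --- does not go through. The sharp dimension of the smoothable $(1,d,3)$-locus is established only for $4\leq d\leq 11$: Lemma~\ref{lem:M2} is a finite Macaulay2 computation that stops at $d=11$, and Theorem~\ref{thm:genwaldo} explicitly states that for $d\geq 12$ the $\kappa$-vector conditions \eqref{eqn:nec:1}, \eqref{eqn:nec:j} fail to characterize smoothability (indeed the proof there shows $\dim\mathcal Z<\dim\Xi(d,2d+2,d)$, so the $\kappa$-cycle overshoots). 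So the ``expected sharp estimate'' is precisely what is unavailable, and the case split you introduce (plus the further unaddressed range $15\leq n<d+4$) becomes unnecessary once the $3(d-11)$ quadric-perturbation parameters are put in.
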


\begin{example}\label{examples} Assume that $\text{char}(k)=0$.
\begin{enumerate}
	\item\label{ex:family} ({\bf Nonsmoothable Families:})  If $e\geq 3$ and $d>\binom{e}{2}$, then a generic $0$-scheme of regularity two, embedding dimension $d$, and degree $1+d+e$, is not smoothable.
%	\item\label{ex:codim1}  ({\bf Singular in codimension $1$}):  Let $R^{d}_{n}$ be the smoothable component of the Hilbert scheme of $n$ points in $\mathbb{A}^{d}$ with $d\geq 11$ and $n\geq 15$.  There exists a closed subset $Z\subseteq R^{d}_{n}$ of codimension $1$ such that every point of $Z$ is a singular point of the Hilbert scheme.
	\item\label{ex:intersect}  ({\bf Intersections away from the smoothable component}): The Hilbert scheme of $11$ points in $\mathbb A^7$ contains two components which intersect away from the smoothable component.
	\item\label{ex:10pts}  ({\bf 9 Points in $\mathbb A^5$}):  The $\kappa$-vector yields detailed information abou the deformations of $0$-schemes of degree $9$ in $\mathbb A^5$.  We say that $J\subseteq k[x_1, \dots, x_5]$ is a  $(1,4,3)^{+1}$-ideal if it is the intersection of a homogeneous ideal with Hilbert function $(1,4,3)$ and the ideal of a reduced point.  Let $I\subseteq k[x_1, \dots, x_5]$ be a homogeneous ideal with Hilbert function $(1,5,3)$ defining a $0$-scheme $\Gamma$.  Then we have
\begin{center}
\begin{tabular}{| l | c | }\hline
$\Gamma$ deforms into a \dots & if and only if \dots \\ \hline
union of $9$ distinct points &$\kappa(I)\leq (5,12,5)$\\ \hline
$(1,4,3)^{+1}$-ideal & $\kappa(I) \leq (4,12,4)$ \\ \hline
{\em smoothable} $(1,4,3)^{+1}$-ideal & $\kappa(I)\leq (4,10,4)$\\ \hline
\end{tabular}
\end{center}
\end{enumerate}
\end{example}

\begin{remark}
Both~\cite{iarrobino-small-tangent-space} and~\cite{shafarevich} consider tangent space obstructions to the smoothability of $0$-schemes of regularity two.  The strongest result along these lines is  \cite[Thm.\ 2]{shafarevich} which implies that, if $\Gamma$ is a generic $0$-scheme satisfying the conditions of Theorem~\ref{thm:1denonsmoothable}, then $\Gamma$ is nonsmoothable whenever $2<e\leq \frac{(d-1)(d-2)}{6}+2$.  Note that Shafarevich's result is strictly stronger than our Example~\ref{examples}~\eqref{ex:family}.
\end{remark}

\subsection{Outline of paper}
The material in this paper is organized as follows. Notation and background on Hilbert schemes, inverse systems, and other topics is included in \S\ref{sec:background}.  In \S\ref{sec:smoothable1de}, we present a dominant rational map to the smoothable regularity two ideals parametrized $\Gr(e,S_2^*)$.  In \S\ref{sec:Kvectors}, we elaborate on the definition of the $\kappa$-vector of an ideal, and we compute values of the $\kappa$-vector for generic and generic smoothable ideals of regularity two.  We also introduce a module whose graded Betti numbers encode the $\kappa$-vector of an ideal. In  \S\ref{sec:Kcycles} we introduce $\kappa$-cycles, which are $\GL(d)$-equivariant subsets of $\Gr(e,S_2^*)$ defined in terms of the $\kappa$-vector, and which play a role in the proof of Theorem~\ref{thm:genwaldo}.  In \S\ref{sec:proofs}, we combine the results of the earlier sections to prove Theorems~\ref{thm:1denonsmoothable}, \ref{thm:minexamples}, and \ref{thm:genwaldo}.  Finally, in \S\ref{sec:applications}, we discuss further connections between deformations of $0$-schemes of regularity and the $\kappa$-vector, and we present the results listed in Example~\ref{examples}.

%%%%%%%%%%%%%%
%%%%%%%%%%%%%%
%%%%%%%%%%%%%%
\section{Background and Notation}\label{sec:background}
%%%%%%%%%%%%%%
%%%%%%%%%%%%%%
%%%%%%%%%%%%%%

%%%%%%%%%%%%%%
\subsection{Hilbert schemes}
%%%%%%%%%%%%%%
We use the notation $H^d_n$ for the Hilbert scheme of $n$ points in $\mathbb A^d$, and we let $R^d_n$ stand for the smoothable component of $H^d_n$.  We now discuss a coordinate system on $H^d_n$.  The reader should refer to~\cite[Ch.\ 18]{cca} for details. Given a monomial ideal $M$ of colength $n$ with standard monomials $\lambda$, let $U_{\lambda}\subseteq H^d_n$ be the set of ideals $I$ such that the monomials in $\lambda$ are a basis for $S/I$. Note that the sets $U_{\lambda}$ form an open cover of $H^d_n$.  An ideal $I\in U_{\lambda}$ has generators of the form $m-\sum_{m'\in\lambda}c_{m'}^mm'$.
The $c_{m'}^m$ are local coordinates for $U_{\lambda}$ which define a closed immersion into affine space.

%%%%%%%%%%%%%%
\subsection{Inverse systems}
%%%%%%%%%%%%%%v
Let $V$ be the vector space $V=\langle x_1, \dots, x_d\rangle$.  The symmetric algebra $\Sym^\bullet(V)$ is isomorphic to the polynomial ring $S=k[x_1,\dots, x_d]$ with the usual grading.  We define $S^*$ to be the divided power algebra $\Div^\bullet(V^*)$.  The ring $S^*$ is a graded algebra and there is a perfect pairing $S_j \times S_j^* \rightarrow S_0^* = k$.

Via this perfect pairing, it is equivalent to give a subspace $I_i\subseteq S_i$ or its orthogonal subspace $I_i^\perp\ \subseteq S_i^*$.  If $I$ is a homogeneous ideal in $S$ then we set $I^\perp = \oplus I_j^\perp$.  The space $I^\perp$ is called the (Macaulay) inverse system of the ideal $I$.  Let $y_1, \dots, y_d$ a basis of $V^*$ which is dual to $x_1, \dots, x_d$.  In characteristic $0$, the ring $S^*$ is isomorphic to the polynomial ring $T:=k[y_1, \dots, y_d]$.  Further, if $\chr k=p$, then $T_i\cong S_i^*$ for all $i<p$.  Since we assume $\chr k\ne 2,3$ and focus on ideals of regularity two, we will abuse notation and identify $S^*$ and $T$ throughout. The reader may refer to~\cite[\S 2]{ems-iar-inverse} and ~\cite[\S A2.4]{eis} for further details.

%%%%%%%%%%%%%%
\subsection{Homogeneous ideals of regularity two}\label{subsec:regtwo}
%%%%%%%%%%%%%%
We often consider ideals $I\subseteq S$ which are homogeneous and which have Hilbert function $(1,d,e)$.

\begin{defn}
If $I \subseteq S$ is homogeneous and has Hilbert function $(1,d,e)$, then we refer to $I$ as a \defi{$(1,d,e)$-ideal}.  Note that every zero-dimensional homogeneous ideal of regularity two is a $(1,d,e)$-ideal where $d$ is the embedding dimension of $I$, and where $e=deg(I)-d-1$.
\end{defn}
\noindent The $(1,d,e)$-ideals are parametrized by $\Gr(e,S_2^*)$ in the following way.  Given a $(1,d,e)$-ideal, observe that $I_2^\perp \in \Gr(e,S_2^*)$.  Conversely, given $V\in \Gr(e,S_2^*)$, the ideal $\langle V^\perp \rangle +\mathfrak m^3$ defines a unique $(1,d,e)$-ideal.  By abuse of notation, we will generally consider $\Gr(e,S_2^*)$ to be the subscheme of $H^d_{1+d+e}$ which parametrizes $(1,d,e)$-ideals of $S$.  If $I$ is a $(1,d,e)$-ideal, we often write $I\in \Gr(e,S_2^*)$ in place of $I_2^\perp \in \Gr(e,S_2^*)$.

%%%%%%%%%%%
%%%%%%%%%%%
\section{Smoothable ideals with Hilbert function $(1,d,e)$}\label{sec:smoothable1de}
%%%%%%%%%%%
%%%%%%%%%%%
In this section we describe the locus of smoothable $(1,d,e)$-ideals in two steps.  First, we show that this locus is irreducible, and that it is dominated by a rational map $\pi$ from the smoothable component of the Hilbert scheme of points (Proposition~\ref{prop:ratlmap}).  Second, we give a more concrete description of the image of $\pi$ (Proposition~\ref{prop:smoothparam}). This description will be used in \S\ref{sec:Kvectors} to compute the $\kappa$-vectors of smoothable $(1,d,e)$-ideals.

Recall that, given any ideal $J\subseteq S$ and any weight vector $\mathbf{w}=(w_1, \dots, w_n)\in \mathbb R^{n}_{\geq 0}$, we may define the initial ideal of $J$ with respect to $\mathbf{w}$, which we denote $\initial_{\mathbf{w}}(J)$ (see \cite[\S7.4]{cca} for details).  If $\mathbf{w}=(1,\dots,1)$, then the ideal $\initial_{(1,\dots,1)}(J)$ is homogeneous ideal the standard grading.

Let $U_e$ be the union of the monomial patches $U_\lambda$ such that $U_\lambda\cap \Gr(e,S_2^*)\ne \emptyset$. We claim that the function $J\mapsto \initial_{(1,\dots,1)}(J)$ is regular in $U_e$, thus defines a rational map $\pi: R^d_{1+d+e}\dashrightarrow \Gr(e,S_2^*)$.  If $J\in U_{\lambda}$ and $\lambda$ has Hilbert function $(1,d,e)$ then for every $m\not\in \lambda$ we have $m-\sum_{m'\in \lambda}c^{m}_{m'}m'\in J$. Therefore $\initial_{(1,\dots, 1)}(J)$ contains an ideal $I$ generated by all cubic monomials and $\binom{d+1}{2}-e$ linearly independent quadrics. Thus $I=\initial_{(1,\dots, 1)}(J)$ since any such $I$ has colength $1+d+e$. It follows that $\initial_{(1,\dots, 1)}(J)\in \Gr(e,S_2^*)$.  On $U_e$, $\pi$ is locally a projection away from those $c_{m}^{m'}$ with ${\rm deg}(m')\neq {\rm deg}(m)$ and thus regular.

\begin{prop}\label{prop:ratlmap}
The locus of smoothable $(1,d,e)$-ideals is the image of $\pi$.
\end{prop}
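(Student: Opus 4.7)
The plan is to prove the two inclusions separately, using the flat family arising from Gröbner degeneration with respect to the weight vector $(1,\dots,1)$ as the main tool in both directions.

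To show the image of $\pi$ is contained in the smoothable locus, I would fix $J\in U_e\cap R^d_{1+d+e}$ and form the Rees-type homogenization $\tilde J\subseteq S[t]$ associated to the weight $(1,\dots,1)$. By the standard flatness theorem for Gröbner degenerations, $S[t]/\tilde J$ is flat over $k[t]$, with fiber at $t=0$ equal to $S/\pi(J)$ and fibers at $t\ne 0$ isomorphic to $S/J$ via the $\mathbb G_m$-action $x_i\mapsto tx_i$. This data yields a morphism $\mathbb A^1\to H^d_{1+d+e}$ whose restriction to $\mathbb A^1\setminus\{0\}$ lands in the $\mathbb G_m$-orbit of $J$, which sits inside $R^d_{1+d+e}$. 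Because the smoothable component is closed in $H^d_{1+d+e}$, the limit at $t=0$, namely $\pi(J)$, also lies in $R^d_{1+d+e}$.

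For the reverse inclusion I would fix a smoothable $(1,d,e)$-ideal $I$, so that $I\in \Gr(e,S_2^*)\cap R^d_{1+d+e}$. First I need to verify that $I$ lies in the domain of $\pi$: choosing any monomial order $\prec$, the set $\lambda$ of standard monomials of $\initial_\prec(I)$ is a basis of $S/I$ and hence has Hilbert function $(1,d,e)$, so $I\in U_\lambda\subseteq U_e$. Since $I$ is homogeneous in the standard grading, every element of $I$ is fixed by $\initial_{(1,\dots,1)}$, so $\pi(I)=\initial_{(1,\dots,1)}(I)=I$, placing $I$ in the image of $\pi$.

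The main obstacle is the first direction, where one must carefully package the Gröbner degeneration as a morphism into the Hilbert scheme and then invoke closedness of $R^d_{1+d+e}$ to transport smoothability from $J$ to $\pi(J)$. Both ingredients are standard once set up. The second inclusion is essentially formal, reducing to the observation that $\pi$ acts as the identity on the homogeneous points of its domain.
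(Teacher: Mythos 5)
Your proposal is correct and takes essentially the same approach as the paper: both inclusions, with the reverse direction reducing to the observation that $\pi$ is the identity on homogeneous ideals, and the forward direction following from the Gr\"obner degeneration together with closedness of $R^d_{1+d+e}$. The paper's proof is terser and leaves the forward inclusion (that $\pi$ carries smoothable ideals to smoothable ideals) implicit, relying on the standard fact that an initial ideal lies in the same irreducible component of the Hilbert scheme as the ideal it degenerates from; you spell that out via the one-parameter flat family, which is exactly the underlying argument.
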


\begin{proof}[Proof of Proposition \ref{prop:ratlmap}]
Observe that $R^d_{1+d+e}\cap \Gr(e,S_2^*)$ belongs to $U_e$ and that $\pi$ is the identity on $R^d_{1+d+e}\cap \Gr(e,S_2^*)$.  Hence we have
\[
R^d_{1+d+e}\cap \Gr(e,S_2^*) \subseteq \pi(U_e\cap R^d_{1+d+e})\subseteq R^d_{1+d+e}\cap \Gr(e,S_2^*),
\]
and we conclude that the image of $\pi$ equals the locus of smoothable $(1,d,e)$-ideals.
\end{proof}

The following proposition provides a more concrete description of the locus of smoothable $(1,d,e)$-ideals.

\begin{prop} \label{prop:smoothparam}
Let $J\subseteq S$ be a generic smoothable ideal of colength $1+d+e$, where $e\leq \binom{d+1}{2}$.  Let $I=\initial_{(1,\dots, 1)}(J)$.  Then:
\begin{enumerate}
\item\label{prop:smoothparam:2}{ $I$ has Hilbert function $(1,d,e)$. Thus it is completely determined by the $e$-dimensional vector space $I_2^{\perp}$.}
\item\label{prop:smoothparam:2}{Up to the action of $GL(d)$, there exist $a_i^{(j)}\in k$ with $1\leq i\leq d$ and $1\leq j\leq e$ such that
\[I_2^{\perp}=\langle q_1,\dots, q_e\rangle\]
with $q_j=\sum_{i=1}^d a^{(j)}_i y_i^2-\left(\sum_{i=1}^d a^{(j)}_i y_i\right)^2$.}
\end{enumerate}
\end{prop}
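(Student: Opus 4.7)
The plan is to realize the generic smoothable $J$ concretely as the defining ideal of $1+d+e$ distinct reduced points in $\mathbb{A}^d$, normalize the configuration via translation and $GL(d)$, and then compute $I_2^\perp$ in the resulting evaluation picture.  Since $R^d_{1+d+e}$ is by definition the closure of the locus of reduced $0$-schemes, a generic smoothable $J$ is the ideal of $1+d+e$ distinct points $\Gamma\subseteq\mathbb{A}^d$ in very general position.  Translating so that $0\in\Gamma$ and then applying the unique $g\in GL(d)$ carrying the next $d$ points of $\Gamma$ (in linearly general position for generic $\Gamma$) to the standard basis $\mathbf{e}_1,\dots,\mathbf{e}_d$, we may assume
\[
\Gamma=\{0,\mathbf{e}_1,\dots,\mathbf{e}_d,\,p_1,\dots,p_e\}, \qquad p_j=(a_1^{(j)},\dots,a_d^{(j)}).
\]

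For part~(1), let $\varepsilon:S\to S/J\cong k^{1+d+e}$ be evaluation at the points of $\Gamma$ and set $F_i:=\varepsilon(S_{\le i})$.  For $\Gamma$ in general position, $\dim F_i=\min\!\bigl(\binom{d+i}{i},\,1+d+e\bigr)$; the hypothesis $e\le\binom{d+1}{2}$ is equivalent to $1+d+e\le\binom{d+2}{2}$, so $\dim F_0=1$, $\dim F_1=d+1$, and $F_2=S/J$.  The description of $\pi$ in \S\ref{sec:smoothable1de} (setting $c^m_{m'}=0$ when $\deg m'<\deg m$, in any chart $U_\lambda\subset U_e$) realizes $I=\pi(J)$ as the associated graded of $S/J$ under the degree filtration, so $(S/I)_i\cong F_i/F_{i-1}$ and $I$ has Hilbert function $(1,d,e)$.

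For part~(2), $(S/I)_2\cong F_2/F_1$ identifies $I_2^\perp\subseteq S_2^*$ with the annihilator $F_1^\perp\subseteq(k^{1+d+e})^*$ via the transpose of $\varepsilon|_{S_2}$.  Writing a general functional as $\omega=c_0E_0^*+\sum_ic_{\mathbf{e}_i}E_{\mathbf{e}_i}^*+\sum_jc_{p_j}E_{p_j}^*$ and imposing $\omega(\varepsilon(1))=\omega(\varepsilon(x_i))=0$ yields, upon setting $c_{p_k}=\delta_{jk}$, a basis $\omega_1,\dots,\omega_e$ of $F_1^\perp$ given by
\[
\omega_j=\Bigl(\textstyle\sum_i a_i^{(j)}-1\Bigr)E_0^*-\sum_ia_i^{(j)}E_{\mathbf{e}_i}^*+E_{p_j}^*.
\]
Pulling back through $\varepsilon|_{S_2}$ and using $f(0)=0$ for $f\in S_2$ together with $(x_kx_l)(\mathbf{e}_i)=\delta_{ki}\delta_{li}$ shows $\varepsilon^*\omega_j(x_kx_l)=-a_k^{(j)}\delta_{kl}+a_k^{(j)}a_l^{(j)}$.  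A direct computation with the divided-power pairing on $S^*$ (where $\langle x_k^2,y_k^2\rangle=2$ and $\langle x_kx_l,y_ky_l\rangle=1$ for $k\ne l$) then shows that $q_j=\sum_i a_i^{(j)}y_i^2-(\sum_i a_i^{(j)}y_i)^2$ pairs with $x_kx_l$ to $-2\varepsilon^*\omega_j(x_kx_l)$, so $\varepsilon^*\omega_j=-\tfrac{1}{2}q_j$ and $I_2^\perp=\langle q_1,\dots,q_e\rangle$.

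The main technical step is the bookkeeping that compares the basis $\{\varepsilon^*\omega_j\}$ of $F_1^\perp$ to the prescribed quadratic forms $\{q_j\}$ via the divided-power pairing.  Once this is carried out, both assertions follow directly from the rank computation for $\varepsilon|_{S_{\le i}}$ and the interpretation of $\pi$ as the associated graded construction.
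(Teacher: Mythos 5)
Your proof is correct, but it takes a genuinely different route from the paper's. The paper homogenizes $J$ to the ideal $\widetilde J$ of $1+d+e$ points in $\mathbb P^d$, uses the fact that the degree-two inverse system of a point is spanned by the square of the corresponding dual linear form to exhibit the $q_j$ inside $\widetilde J_2^\perp\cap k[y_1,\dots,y_d]$ (via the identity $q_j=y_0^2-l_j^2+\sum_i a_i^{(j)}\bigl((y_0+y_i)^2-y_0^2\bigr)$), proves their linear independence by nondegeneracy of the quadratic Veronese (this is where $e\leq\binom{d+1}{2}$ enters), and finally shows $\widetilde J_2^\perp\cap k[y_1,\dots,y_d]\subseteq \initial_{(1,\dots,1)}(J)_2^\perp$ by a short homogenization argument, concluding by a dimension count. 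You instead stay entirely in the affine picture: you identify $S/\initial_{(1,\dots,1)}(J)$ with the associated graded of the degree filtration $F_\bullet$ on $S/J$, read off the Hilbert function from the ranks of the evaluation maps, and obtain $I_2^\perp$ as the pullback under $\varepsilon|_{S_2}$ of an explicit dual basis of $F_1^\perp$, so that the linear independence of $q_1,\dots,q_e$ comes for free, whereas the paper must argue it separately. The trade-off is that your one genericity input --- that $1+d+e\leq\binom{d+2}{2}$ general points impose independent conditions on $S_{\leq 2}$, i.e.\ $\dim F_2=1+d+e$ --- is asserted rather than proved; it is equivalent to the Veronese nondegeneracy that the paper spells out in Lemma~\ref{lem: inv1}, so it should be cited or proved (and note that it survives your normalization, since affine changes of coordinates preserve $S_{\leq i}$). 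Two small points to add: since you normalize by a translation as well as by $GL(d)$, observe (as the paper does) that $\initial_{(1,\dots,1)}$ is unchanged by translations and commutes with $GL(d)$, so your computation for the normalized configuration indeed gives the statement ``up to $GL(d)$'' for the original $I$; and your pairing convention $\langle x_k^2,y_k^2\rangle=2$, $\langle x_kx_l,y_ky_l\rangle=1$ is the one induced by the divided-power identification used in the paper, with the final division by $2$ harmless because $\chr(k)\neq 2$.
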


Let $J$ be a generic ideal of $1+d+e$ reduced points in $\mathbb A^d$, where $e\leq \binom{d+1}{2}$. Acting with translations and with $\GL(d)$ we may assume that $V(J)$ contains the origin $p_0$ and the $d$ canonical basis vectors $p_1, \dots, p_d$ in $\mathbb A^d$.  Moreover $V(J)$ contains $e$ additional points $p_{d+j}=(a_1^{(j)},\dots, a_d^{(j)})$, $1\leq j\leq e$. Let $\widetilde{J}$ be the homogenization of $J$ in $S[x_0]$.  Note that $\widetilde{J}$ is the homogeneous ideal defining $\cup_{i=0}^{d+e} [1:p_i] \subseteq \mathbb P^d$.

\begin{lemma}\label{lem: inv1} With notation as above, we have
\begin{enumerate}
\item{For $1\leq i\leq d$, $1\leq j\leq e$, and any $s\in \mathbb N$, the inverse system $\widetilde{J}^{\perp}$ contains  \[y_0^s,(y_0+y_i)^s, (y_0+a_1^{(j)}y_1+\dots a_d^{(j)}y_d)^s.\]}
\item{$\widetilde{J}_2^{\perp}\cap k[y_1,\dots, y_n]$ contains $q_1,\dots, q_e$ where
\[q_j=\sum_{i=1}^d a^{(j)}_i y_i^2-\left(\sum_{i=1}^d a^{(j)}_i y_i\right)^2.\]
Moreover the $q_j$ are linearly independent.
}
\end{enumerate}
\end{lemma}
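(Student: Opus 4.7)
The plan is to derive both parts from the elementary description of the Macaulay inverse system of a reduced zero-dimensional subscheme.

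For part (1), I recall that the degree-$s$ part of the inverse system of the ideal of a single reduced point $[b_0 : \cdots : b_d] \in \mathbb{P}^d$ is the one-dimensional subspace of $T_s$ spanned by $(b_0 y_0 + \cdots + b_d y_d)^s$. Since $\widetilde{J}$ is the intersection of the ideals of the points $[1:p_0]$, $[1:p_i]$, and $[1:p_{d+j}]$, the inverse system $\widetilde{J}^\perp$ is the sum of the corresponding one-dimensional subspaces in each degree. Specializing to each of the three types of points yields the three claimed elements.

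For the first claim of part (2), I would verify the algebraic identity
\[ q_j \;=\; \bigl(1 - \textstyle\sum_i a^{(j)}_i\bigr)\, y_0^2 \;+\; \sum_i a^{(j)}_i\, (y_0+y_i)^2 \;-\; \Bigl(y_0 + \sum_i a^{(j)}_i y_i\Bigr)^2 \]
by expanding both sides; the coefficients of $y_0^2$ and of each $y_0 y_i$ cancel, leaving exactly $q_j$. Each summand on the right lies in $\widetilde{J}_2^\perp$ by part (1), so $q_j \in \widetilde{J}_2^\perp$, and $q_j \in k[y_1,\dots,y_d]$ by construction.

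The main obstacle is linear independence of $q_1, \dots, q_e$, which I plan to deduce from a genericity argument. Consider the polynomial map $\phi\colon k^d \to k[y_1,\dots,y_d]_2$ given by $\phi(a) = \sum_i a_i y_i^2 - (\sum_i a_i y_i)^2$, so that $q_j = \phi(a^{(j)})$. The evaluation $\phi(\lambda e_i) = \lambda(1-\lambda)\, y_i^2$ places every $y_i^2$ in the $k$-linear span of the image of $\phi$, and then $\phi(\lambda e_i + \mu e_j) - \phi(\lambda e_i) - \phi(\mu e_j) = -2\lambda\mu\, y_i y_j$ places every $y_i y_j$ there as well. Hence the span of the image of $\phi$ is all of $k[y_1,\dots,y_d]_2$, a space of dimension $\binom{d+1}{2} \ge e$. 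Linear dependence of $\phi(a^{(1)}), \dots, \phi(a^{(e)})$ is therefore a proper closed condition on $(a^{(1)},\dots,a^{(e)}) \in (k^d)^e$, and so fails for the generic choice of points assumed in the setup.
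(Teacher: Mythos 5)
Your argument is correct and part (1) and the algebraic identity in part (2) are essentially the same as the paper's (the paper writes the identity as $q_j = y_0^2 - l_j^2 + \sum_i a_i^{(j)}\bigl((y_0+y_i)^2 - y_0^2\bigr)$, which regroups to your expression). Where you diverge is in the linear-independence step. The paper works entirely with the squares of linear forms: it observes that independence of $y_0^2$, the $(y_0+y_i)^2$, and the $l_j^2$ (all $1+d+e$ of them) would force independence of the $q_j$, since each $l_j^2$ appears with coefficient $-1$ only in $q_j$; and then it invokes non-degeneracy of the second Veronese embedding of $\mathbb{P}^d$ to conclude that squares of up to $\binom{d+2}{2}$ generic linear forms are independent. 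You instead work directly with the map $\phi(a) = \sum_i a_i y_i^2 - (\sum_i a_i y_i)^2$ and show by explicit evaluations (the $\lambda(1-\lambda)y_i^2$ and the $-2\lambda\mu\, y_i y_j$ computations, which use $\chr k \neq 2$) that the linear span of $\operatorname{im}\phi$ is all of $k[y_1,\dots,y_d]_2$, so that $e \leq \binom{d+1}{2}$ generic points of the image are independent by a standard constructibility/genericity argument. Both are valid. Your route is more elementary and self-contained, avoiding the appeal to the Veronese, and it directly establishes that a generic tuple of points yields independent $q_j$. The paper's route is shorter once the Veronese fact is taken as known, and it yields the slightly stronger statement that all $1+d+e$ squares (not just the $q_j$) are independent, which is sometimes convenient. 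One small thing worth making explicit in your version is the nonvanishing of $-2$, i.e.\ the standing hypothesis $\chr k \neq 2$.
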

\begin{proof} For a point $r=[r_0:\dots: r_d]$ in $\mathbb{P}^d$ with homogeneous ideal $W$, it is immediate that $W_j^{\perp}={\rm span}_k((r_0y_0+\dots+r_dy_d)^j)$. If $W^{(1)},\dots, W^{(1+d+e)}$ are the ideals of the points of $V(\widetilde{J})$, then we have
\[\widetilde{J}_j^{\perp}=\left( \bigcap_{i=1}^{1+d+e} W^{(i)}\right)_j^{\perp} \supseteq (W^{(1)})_j^{\perp}+\dots+(W^{(1+d+e)})_j^{\perp}. \]
Hence $(1)$. For $(2)$, let $l_j:=y_0+a_1^{(j)}y_1+\dots a_d^{(j)}y_d$ and observe that
\[q_j=y_0^2-l_j^2+\sum_{i=1}^da_i^{(j)}\left((y_0+y_i)^2-y_0^2\right). \]
Thus, the $q_j$ belong to $\widetilde{J}_2^\perp \cap k[y_1, \dots, y_d]$.  To prove that the $q_j$ are linearly independent it suffices to show that all squares of linear forms in part $(1)$ are linearly independent.  Note that the squares of linear forms are precisely the points in the image of the second Veronese embedding of $\mathbb{P}^d$ via the complete linear system $|2H|$. This image does not lie in any hyperplane of $\mathbb{P}^{\binom{d+2}{2}-1}$, and therefore the squares of a generic set of $m\leq \binom{d+2}{2}$ linear forms in $d$ variables are linearly independent.  Since $e\leq \binom{d+1}{2}$, it then follows that the $q_j$ are linearly independent.
\end{proof}

\begin{proof}[Proof of Proposition \ref{prop:smoothparam}] Part $(1)$ follows from Lemma \ref{lem: inv1}.  For part $(2)$, note that for any translation $T$, any $G\in GL(d)$, and any polynomial $g\in S$, we have $${\rm in}_{(1,\dots, 1)}(G(T(g))=G({\rm in}_{(1,\dots, 1)}(g)).$$  Thus we may assume that $V(J)$ contains the origin $p_0$ and the $d$ canonical basis vectors $p_1, \dots, p_d$. Moreover, $V(J)$ contains $e$ additional points $p_{d+j}$ whose coordinates we label $(a_1^{(j)},\dots, a_d^{(j)})$, $1\leq j\leq e$.

Now let $h_2\in {\rm in}_{(1,\dots, 1)}(J)_2$ and let $g=h_2+h_1+h_0\in J$ with $\deg(h_i)=i$. Define $\tilde{g}=h+x_0h_1+x_0^2h_2\in \widetilde{J}$.  For any $\phi\in \widetilde{J}_2^{\perp}\cap k[y_1,\dots, y_n]$ we have
$\phi h_2=\phi (\tilde{g})=0$. Therefore
\[\widetilde{J}_2^{\perp}\cap k[y_1,\dots, y_n]\subseteq {\rm in}_{(1,\dots, 1)}(J)_2^{\perp}\]
Applying Lemma~\ref{lem: inv1}, we see that $\langle q_1,\dots, q_e\rangle \subseteq {\rm in}_{(1,\dots, 1)}(J)_2^{\perp}$. Part $(2)$ of the proposition follows since the $q_j$ are linearly independent and the right hand side has dimension $e$.
\end{proof}

Using Proposition~\ref{prop:ratlmap}, we now estimate the dimension of the locus of smoothable ideals in $\Gr(3,S_2^*)$.  This is an important ingredient in the proof of Theorem~\ref{thm:genwaldo}. We briefly review a coordinate system for $R^d_n$ introduced in~\cite[(2.21)]{haiman-catalan}.  Suppose $M$ is a monomial ideal of colength $n$ with standard monomials $\lambda$, and suppose that $J$ is an ideal such that $V(J)$ consists of $n$ distinct points $a^{(1)},\dots, a^{(n)}$ with coordinates $a_i^{(j)}$ for $1\leq i\leq d$. Fix an order $(m_1,\dots, m_n)$ on the set $\lambda$ and define $\Delta_{\lambda}=\det\left([m_i(a^{(j)})]_{i,j}\right)$. If $J\in U_{\lambda}$, then we can express the coordinates $c_{m'}^m$ in terms of the $a_i^{(j)}$ using Cramer's rule as:
\[c_{m'}^m=\frac{\Delta_{\lambda-m'+m}}{\Delta_{\lambda}}\]
where $\lambda-m'+m$ is the ordered set of monomials obtained from $\lambda$ by
replacing $m'$ with $m$. Glueing over the various $U_\lambda$, these quotients
determine a rational map $\Delta: (\mathbb{A}^d)^n \dasharrow R^d_n$ which is regular when the points $a^{(j)}$ are all distinct.

\begin{lemma}\label{lem:M2}
If $4\leq d\leq 11$, then $R^d_{d+4}\cap \Gr(3,S_2^*)$ has codimension at most ${d-2}\choose{2}$ in $\Gr(3,S_2^*)$ and codimension at most $12$ in $R^{d}_{d+4}$.
\end{lemma}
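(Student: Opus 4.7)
Both claimed codimension bounds reduce to a single dimension inequality. Using $\dim \Gr(3,S_2^*) = 3\binom{d+1}{2} - 9$, $\dim R^d_{d+4} = d(d+4)$, and the identity
\[ 3\binom{d+1}{2} - \binom{d-2}{2} = d^2 + 4d - 3, \]
one checks that both inequalities in the lemma are equivalent to
\[ \dim\bigl(R^d_{d+4} \cap \Gr(3,S_2^*)\bigr) \geq d^2 + 4d - 12. \]

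The key input is the explicit parameterization provided by Propositions~\ref{prop:ratlmap} and~\ref{prop:smoothparam}: they together yield a dominant rational map
\[ \Phi: GL(d) \times (\mathbb{A}^d)^3 \dashrightarrow R^d_{d+4} \cap \Gr(3,S_2^*), \qquad (G,(a_i^{(j)})) \mapsto G \cdot \langle q_1,q_2,q_3 \rangle, \]
where $q_j = \sum_{i=1}^d a_i^{(j)} y_i^2 - \bigl(\sum_{i=1}^d a_i^{(j)} y_i\bigr)^2$. Since $\mathrm{rank}(d\Phi)$ is lower semicontinuous on the source and equals $\dim(\mathrm{image}\,\Phi)$ at a generic point, it suffices to exhibit a single point $(G,a)$ at which $\mathrm{rank}(d\Phi) \geq d^2 + 4d - 12$: then the locus where the rank meets this bound is a non-empty open subset of the (irreducible) source, so the image dimension is at least $d^2+4d-12$.

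For each $d \in \{4,\ldots,11\}$ separately, the plan is to carry this out in Macaulay2: choose $(G,a)$ at random over a large prime field, assemble the Jacobian matrix of $\Phi$ with respect to affine coordinates on a Pl\"ucker chart of $\Gr(3,S_2^*)$ containing the image, and verify that its rank meets the required bound. The main obstacle is purely computational rather than conceptual. At $d = 11$ the source of $\Phi$ has dimension $d^2 + 3d = 154$ while the target bound is $153$, so the generic fiber of $\Phi$ has dimension at most $1$ and the computed rank must land within $1$ of the maximum possible; this tight boundary behaviour is consistent with the cutoff $d \leq 11$ appearing in the statement, and it means in practice that the Jacobian computation must be organized carefully (e.g., using random specializations rather than symbolic Gr\"obner basis computations) to remain tractable as $d$ grows to $11$.
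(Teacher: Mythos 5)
Your plan is essentially the paper's own argument: the paper also lower-bounds the dimension of the smoothable locus in $\Gr(3,S_2^*)$ by a Macaulay2 rank computation attached to the explicit parameterization coming from Propositions~\ref{prop:ratlmap} and~\ref{prop:smoothparam} --- it uses $g=\pi\circ\Delta$ from $(\mathbb{A}^d)^{d+4}$ and bounds $\dim g(Y)\geq \dim Y-\dim T_qY_{g(q)}$ at a random point, which is the same Jacobian-rank verification you describe for your map $\Phi$, and your reduction of both codimension claims to $\dim\bigl(R^d_{d+4}\cap\Gr(3,S_2^*)\bigr)\geq d^2+4d-12$ matches the paper's tabulated values exactly. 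The only caveats are minor: the substantive content is the deferred machine computation (the paper's proof is precisely that table of ranks for $4\leq d\leq 11$), and a computation over a single large prime certifies the bound only in that characteristic and, after lifting the chosen point to $\mathbb{Z}$, in characteristic $0$, whereas the paper computes over $\mathbb{Q}$ and then invokes semicontinuity to claim the bound in every characteristic.
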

\begin{proof} We have a rational map  $g:=\pi\circ\Delta: (\mathbb A^d)^n \dashrightarrow \Gr(e,S_2^*)$ where $\pi$ is the map introduced in Proposition~\ref{prop:ratlmap}. Let $Y$ be the domain of definition of $g$.  If $q\in Y$, then the dimension of every component of the fiber $Y_{g(q)}$ is at least $\dim(Y)-\dim(g(Y))$.  It follows that for any $q\in Y$ we have
\[\dim(g(Y))\geq \dim(Y)-\dim(T_{q}Y_{g(q)}).\]
This inequality allows the computation of explicit lower bounds for the dimension of the locus of smoothable $(1,d,3)$-ideals for small values of $d$.  Computing $\dim(T_qY_{g(q)})$ in Macaulay2~\cite{M2} with $k=\mathbb Q$ yields the following table:
\[
\begin{array}{|c|c|c|c|c|}
\hline
d & n & \leq \dim(g(Y)) & \dim(\Gr(3,S_2^*)) & \binom{d-2}{2} \\
\hline
4 & 8 & 20 & 21 & 1\\ \hline
5 & 9 & 33 & 36 & 3 \\ \hline
6 & 10 & 48 & 54 & 6\\ \hline
7 & 11 & 65 & 75 & 10\\ \hline
8 & 12 & 84 & 99 & 15\\ \hline
9 & 13 & 105 & 126 & 21\\ \hline
10 & 14 & 128 & 156 & 28\\ \hline
11 & 15 & 153 & 189 & 36\\
\hline
\end{array}
\]
Thus, for $4\leq d\leq 11$ the codimension of the intersection $R^d_{d+4}\cap \Gr(3,S_2^*)$ in $\Gr(3,S_2^*)$ is at most ${d-2\choose 2}$.  By semicontinuity of fiber dimensions, the lower bound obtained by computation over $\mathbb Q$ holds over a field of any characteristic. Finally, the last statement of the proposition follows since the dimension of $R^{d}_{d+4}$ is $d(d+4)$.
\end{proof}

%%%%%%%
%%%%%%%
\section{$\kappa$-vectors and Betti numbers }\label{sec:Kvectors}
%%%%%%%
%%%%%%%

In this section, we elaborate on the definition of $\kappa$-vector, and we discuss some of its elementary properties. We compute $\kappa_0$ and $\kappa_1$ of a generic $(1,d,e)$-ideal and of a generic {\em smoothable} $(1,d,e)$-ideal.  These computations will be used in the proofs of Theorems~\ref{thm:1denonsmoothable}, \ref{thm:minexamples} and~\ref{thm:genwaldo}.  We also reinterpret the entries of the $\kappa$-vector of $I$ as the graded Betti numbers of a certain module constructed from $I$.  This interpretation reveals surprising dependencies among the entries of the $\kappa$-vector.

Since ${\rm char}(k)\neq 2$, we will think of elements of $S_2^*$ as symmetric linear transformations from $S_1$ to $S_1^*$.  Let $I\subseteq S$ be a homogeneous ideal with Hilbert function $(1,d,e)$ and choose a basis $\mathbf{A}=(A_1,\dots, A_e)$ of $I_2^{\perp}$ .

\begin{defn}\label{defn:kappa-vec}
For $0\leq j \leq e-1$ let $\psi_j(\mathbf A)$ be the linear map from $S_1\otimes \wedge^j I_2^\perp$ to $S_1^*\otimes \wedge^{j+1} I_2^\perp$ given by:
 $$\psi_j(\mathbf A)(u \otimes E)=u\otimes (E\wedge \mathbf A):=\sum_{i=1}^e A_i(u)\otimes (E\wedge A_i).$$
 We define the $\kappa$-vector $\kappa(I)=(\kappa_0(I), \dots, \kappa_{e-1}(I))$ by
 $$\kappa_j(I):=\rank \left(\psi_j( \mathbf A)\right).$$
\end{defn}
\noindent Note that $A_i$ is playing different roles on the two sides of the tensor.  On the left-hand side, $A_i\in \Hom(S_1,S_1^*)$, so that $A_i(u)\in S_1^*$.  On the right-hand side, $A_i$ is an element of the vector space $I_2^\perp$, so that $E\wedge A_i\in \wedge^{j+1} I_2^\perp$.

\begin{lemma}\label{lem: inv} The $\kappa$-vector has the following properties:
\begin{enumerate}
\item{The $\kappa$-vector $\kappa(I)$ does not depend on the choice of basis of $I_2^\perp$ and is invariant under linear changes of coordinates on $S$.}
\item Each $\kappa_j$ is lower semicontinuous on $\Gr(e,S_2^*)$.
\item {The $\kappa$-vector is symmetric: $\kappa_j=\kappa_{e-j-1}$ for every $j\leq \lfloor{\frac{e}{2}}\rfloor$.}
\item {Let $e=3 \mod 4$ and let $e=4f+3$.  Assume further that $\binom{4f+3}{2f+1}$ and $d$ are odd.  Then $\kappa_{2f+1}(I)< d\binom{4f+3}{2f+1}$.}
\end{enumerate}
\end{lemma}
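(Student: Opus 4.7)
Parts (1) and (2) are essentially formal. A change of basis of $W := I_2^\perp$ by $g \in \GL(e)$ acts diagonally on $\wedge^j W$ and $\wedge^{j+1} W$, while a change of coordinates $h \in \GL(d)$ acts compatibly on $S_1$, $S_1^*$, and on each $A_i$ (via $A_i \mapsto (h^{-1})^T A_i h^{-1}$). In either case the effect on the matrix of $\psi_j(\mathbf A)$ is pre- and post-composition by invertible operators, so the rank is preserved. Part (2) is then the standard fact that the condition $\rank \psi_j(\mathbf A) \geq r$ is open, being cut out by the nonvanishing of $r \times r$ minors.

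For part (3) I plan to identify $\psi_{e-j-1}$ with the transpose of $\psi_j$ up to sign, using the Hodge-type isomorphism $\wedge^k W \cong (\wedge^{e-k} W)^*$ induced by a chosen generator of $\wedge^e W$. In the wedge basis $\{A_I\}_{|I|=j}$, the nonzero blocks of $\psi_j$ are indexed by pairs $(I, J)$ with $I \subset J$ and $|J| = |I|+1$, and equal $\pm A_k$ with $\{k\} = J \setminus I$. Under the complement bijection $I \leftrightarrow I^c$, these correspond exactly to the nonzero blocks of $\psi_{e-j-1}$ on the pair $(J^c, I^c)$, so the two maps are transposes of one another up to an overall sign. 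Since transposition preserves rank, $\kappa_j = \kappa_{e-j-1}$.

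Part (4) is the substantive step. When $e = 2j+1$, parts (1)-(3) combine to show that $\psi_j$ corresponds to a bilinear form $B$ on $V := S_1 \otimes \wedge^j W$, obtained by pairing the output in $S_1^* \otimes \wedge^{j+1} W$ against elements of $V$ via the Hodge isomorphism. Unpacking the definition,
\[ B(u_1 \otimes A_I,\, u_2 \otimes A_{I'}) = \epsilon(I, I')\, A_k(u_1)(u_2) \]
when $I$, $I'$, $\{k\}$ partition $\{1, \ldots, e\}$ and zero otherwise; here $\epsilon(I, I')$ is the sign satisfying $A_I \wedge A_k \wedge A_{I'} = \epsilon(I, I')\, A_1 \wedge \cdots \wedge A_e$. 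Since each $A_k$ is symmetric, $A_k(u_1)(u_2) = A_k(u_2)(u_1)$, and a short computation using anticommutativity of the wedge yields $\epsilon(I', I) = (-1)^j \epsilon(I, I')$. Hence $B(y, x) = (-1)^j B(x, y)$, so $B$ is skew-symmetric precisely when $j = 2f+1$ is odd. A skew-symmetric form is singular on any odd-dimensional space, and the hypotheses that both $d$ and $\binom{4f+3}{2f+1}$ are odd ensure that $\dim V = d\binom{4f+3}{2f+1}$ is odd, yielding $\kappa_{2f+1}(I) < d\binom{4f+3}{2f+1}$.

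The main obstacle is tracking signs carefully enough to pin down symmetry versus skew-symmetry in part (4); the underlying reason is just the interplay between symmetry of the $A_i$ and antisymmetry of the wedge product, but converting this intuition into a clean computation requires care with the identifications in part (3).
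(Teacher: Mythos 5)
Your proof is correct and follows the same essential approach as the paper: the argument for (1) and (2) is identical, and for (3) and (4) you exploit the symmetry of the $A_i$ together with the Hodge-type duality $\wedge^j W\cong(\wedge^{e-j}W)^*$ exactly as the paper does. The one genuine difference is that the paper's proof of (3) and (4) is a terse assertion --- ``the matrices $A_i$ are symmetric, and thus $\psi_j(\mathbf A)^t=\pm\psi_{e-1-j}(\mathbf A)$'' followed by the claim that $\psi_{2f+1}(\mathbf A)$ is skew-symmetric --- whereas you actually carry out the sign bookkeeping, recasting $\psi_j$ (for the middle index $e=2j+1$) as a bilinear form $B$ on $S_1\otimes\wedge^j W$ and verifying $\epsilon(I',I)=(-1)^j\epsilon(I,I')$. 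Your computation is correct: moving $A_I$ past $A_k\wedge A_{I'}$ gives $(-1)^{j(j+1)}$, then moving $A_k$ past $A_{I'}$ gives another $(-1)^j$, and $(-1)^{j(j+1)+j}=(-1)^j$, so $B$ is skew precisely when $j$ is odd; combined with $\dim V=d\binom{4f+3}{2f+1}$ odd, the rank drop follows. So this is not a different route, but it does make explicit a point the paper leaves to the reader, namely that the ``$\pm$'' in part (3) is not merely a global sign but interacts with the block structure in a way that must be checked to yield genuine skew-symmetry at the middle index.
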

\begin{proof}
(1) Suppose that $\alpha \in GL(e)$ is the change of basis from $\mathbf A$ to some other basis $\alpha(\mathbf{A})$.  Set $\Gamma_j:=  Id_{d} \otimes\left(\wedge^{j} \alpha \right)$.  Note that:
$$\Gamma_j^{-1} \psi_j(\alpha(\mathbf A)) \Gamma_{j+1}=\psi_j(\mathbf A)$$
It follows immediately that $\kappa_j(I)$ does not depend on our choice of basis of $I_2^\perp$.

Next let $\beta\in GL(d)$ and let $B_j:= \beta \otimes \wedge^j Id_{e}$.  Then we have:
$$B_j^{t} \psi_i(\mathbf A) B_{j+1}=\psi_i(\beta^t \mathbf A \beta)$$
Thus $\kappa_i(I)$ is invariant under the $GL(d)$-action.

(2) For a fixed sequence $\vec s \in \mathbb N^e$, the locus $\{ I \in \Gr(e,S_2^*) | \kappa(I)\leq \vec s\}$ is cut out by determinantal conditions on the maps $\psi_j(I)$ used in the definition of $\kappa(I)$.  The ranks of these linear maps can not increase under specialization of the vector space $I_2^\perp$, thus yielding the desired semicontinuity.

(3) The matrices $A_i$ are symmetric, and thus $\psi_{j}(\mathbf{A})^t=\pm \psi_{e-1-j}(\mathbf{A})$.

(4) The conditions guarantee that $\psi_{2f+1}(\mathbf{A})$ is a skew-symmetric matrix of odd size, and hence it cannot have full rank.
\end{proof}

We now compute $\kappa_0$ and $\kappa_1$ for some $(1,d,e)$-ideals.

\begin{prop}\label{prop:genKvec}\label{prop:smoothKvec}
Let $e\geq 3$, let $I$ be a generic $(1,d,e)$-ideal, and let $I'$ be a generic smoothable $(1,d,e)$-ideal.
\begin{enumerate}
    \item\label{prop:genKvec:1} ({\bf Generic case})
        \begin{itemize}
            \item $\kappa_0(I)=d.$
            \item  $\kappa_1(I)=ed$ unless $e=3$ and $d$ is odd, in which case $\kappa_1(I)=3d-1$.
        \end{itemize}
    \item\label{prop:genKvec:2}({\bf Generic smoothable case})
        \begin{itemize}
            \item  $\kappa_0(I')=d$.
            \item  If $d\geq \binom{e}{2}$, then $\kappa_1(I')\leq (e-1)d+\binom{e}{2}$.  Further, if $e=3$ then $\kappa_1(I')=2d+2$.
            \item  $\kappa_i(I')\leq (d+e)\binom{e-1}{i}$ for $i=1, \dots, e-1$.
        \end{itemize}
\end{enumerate}
\end{prop}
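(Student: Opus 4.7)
The overall strategy relies on the lower semicontinuity of the $\kappa_j$'s (Lemma~\ref{lem: inv}(2)) and the $GL(d)$-invariance (Lemma~\ref{lem: inv}(1)). For part (1), this means it suffices to exhibit explicit examples realizing the claimed values. For part (2), since the smoothable locus is irreducible (Proposition~\ref{prop:ratlmap}), a generic smoothable ideal realizes the \emph{maximum} of $\kappa_j$ on this locus, so the upper bounds claimed in part (2) can be verified by computing on the explicit model provided by Proposition~\ref{prop:smoothparam}, namely $(I')_2^\perp = \langle q_1, \ldots, q_e\rangle$ with $q_j = \sum_i a_i^{(j)} y_i^2 - \ell_j^2$ and $\ell_j = \sum_i a_i^{(j)} y_i$. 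A direct computation gives $A_j = D_j - v_j v_j^T$, where $D_j = \operatorname{diag}(a_1^{(j)}, \ldots, a_d^{(j)})$ and $v_j = (a_1^{(j)}, \ldots, a_d^{(j)})$: a rank-one perturbation of a diagonal matrix.

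For part (1), the bound $\kappa_0(I)\leq d$ is immediate from the dimension of the domain of $\psi_0$, and $\kappa_0(I)=d$ is equivalent to $\bigcap_j \ker A_j=0$. Via the inverse system this amounts to saying that no nonzero linear form contracts all of $I_2^\perp$ to zero, which holds for any $(1,d,e)$-ideal with no degree-one socle and in particular for a generic one. For $\kappa_1$ one has $\kappa_1\leq de$ from the domain, sharpened to $3d-1$ when $e=3$ and $d$ is odd by Lemma~\ref{lem: inv}(4). To show these bounds are attained, I would produce an explicit $(1,d,e)$-ideal (e.g.\ obtained by a small perturbation of a monomial ideal) realizing the maximal possible rank of $\psi_1$, verifying injectivity directly when $e\neq 3$ or $d$ is even, and (in the remaining case) showing the skew-symmetric $3d\times 3d$ matrix $\psi_1$ has rank exactly $3d-1$; this reduces to non-vanishing of a Pfaffian and can be checked by specialization.

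For part (2), the equality $\kappa_0(I')=d$ follows as above using the explicit form of $A_j$: the equation $A_j u = 0$ forces $u$ to be proportional to $\mathbf 1$ with a scalar that vanishes for generic $a_i^{(j)}$. For the upper bounds on $\kappa_i$ I would exploit the containment $(I')_2^\perp\subseteq W$, where $W = \langle y_1^2, \ldots, y_d^2, \ell_1^2, \ldots, \ell_e^2 \rangle \subseteq S_2^*$ is the $(d+e)$-dimensional subspace spanned by squares. The contraction action of $S_1$ on $W$ has a particularly simple form ($u$ acts on the square $f^2$ as $2(u\cdot f)\,f$), which forces the image of $\psi_i$ to lie in a specified subspace of $S_1^*\otimes \wedge^{i+1}(I')_2^\perp$ and yields $\kappa_i(I')\leq (d+e)\binom{e-1}{i}$. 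The sharper bound $\kappa_1(I')\leq (e-1)d+\binom{e}{2}$ requires producing an additional $d-\binom{e}{2}$ independent elements of $\ker\psi_1$; these arise from the ``sum-of-squares minus a square'' presentation $q_j = \sum_l a_l^{(j)} y_l(y_l - \ell_j)$, which constrains the columns of $A_j$ to lie in $\langle e_l - v_j\rangle$ and thereby generates syzygies among the $(A_j u_i - A_i u_j)$. Finally, the equality $\kappa_1(I')=2d+2$ for $e=3$ combines this upper bound with a matching lower bound obtained by semicontinuity from a single explicit smoothable example.

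The main obstacle will be the construction of the kernel elements of $\psi_1$ in the smoothable case, as producing exactly $d-\binom{e}{2}$ linearly independent syzygies requires delicate bookkeeping on the matrices $A_j = D_j - v_j v_j^T$. I expect the cleanest presentation will use the reinterpretation of $\kappa_j$ as graded Betti numbers of an auxiliary module (to be introduced later in this section), which replaces the direct syzygy calculation with a Koszul-type cohomology computation on the inverse system of the homogenized ideal of points in $\mathbb P^d$.
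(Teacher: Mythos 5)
Your overall framework is correct and matches the paper's: fix the explicit model from Proposition~\ref{prop:smoothparam} (up to a sign you write $A_j=D_j-v_jv_j^T$; the paper writes $A_i=E_i-D_i$), exploit semicontinuity and $GL(d)$-invariance, and use Lemma~\ref{lem: inv}(4) for the parity correction when $e=3$ and $d$ is odd. But two of the three substantive rank estimates in part (2) are resolved in the paper by arguments that are genuinely different from what you sketch, and those are exactly where your plan stalls.

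For the bound $\kappa_1(I')\leq(e-1)d+\binom{e}{2}$, you propose to exhibit $d-\binom{e}{2}$ linearly independent kernel elements of $\psi_1$, and you flag this bookkeeping as ``the main obstacle.'' The paper avoids that entirely: by the symmetry $\kappa_j=\kappa_{e-1-j}$ it suffices to bound $\kappa_{e-2}$, and since $\psi_{e-2}(\mathbf A)=\psi_{e-2}(\mathbf E)-\psi_{e-2}(\mathbf D)$ one gets
\[
\kappa_{e-2}(\mathbf A)\ \leq\ \kappa_{e-2}(\mathbf D)+\kappa_{e-2}(\mathbf E)-\dim\bigl(\operatorname{Im}\psi_{e-2}(\mathbf D)\cap\operatorname{Im}\psi_{e-2}(\mathbf E)\bigr).
\]
The three terms are then estimated separately: $\mathbb K(\mathbf D)$ is the Koszul complex of a regular sequence of diagonal entries (a direct sum of $e$ copies of the reduced cochain complex of the $d$-simplex), hence exact with $\kappa_{e-2}(\mathbf D)=d(e-1)$; the block structure of $\psi_{e-2}(\mathbf E)$ (columns of rank at most $2$) gives $\kappa_{e-2}(\mathbf E)\leq\min\{de,2\binom e2\}$; and the intersection is bounded below by composing with $\psi_{e-1}(\mathbf D)$ and using exactness. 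This image-side computation is the key insight your plan is missing, and switching from $\psi_1$ to $\psi_{e-2}$ is what makes the codomain small enough for it to work. If you pursue the direct syzygy count instead, you would need to construct those syzygies explicitly, and it is not clear how.

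For the bound $\kappa_i(I')\leq(d+e)\binom{e-1}{i}$, your proposed argument via containment of $\operatorname{Im}\psi_i$ in a fixed subspace of $S_1^*\otimes\wedge^{i+1}I_2^\perp$ is not formulated precisely enough to evaluate; in particular, for small $d$ the claimed bound $(d+e)\binom{e-1}{i}$ can exceed $\dim\bigl(S_1^*\otimes\wedge^{i+1}I_2^\perp\bigr)=d\binom{e}{i+1}$, so the estimate cannot come from a subspace of the codomain alone. The paper's route is to extend $\kappa_i$ to arbitrary $3$-tensors in $k^e\otimes k^d\otimes k^d$, note the subadditivity $\kappa_i(x+x')\leq\kappa_i(x)+\kappa_i(x')$, observe that Proposition~\ref{prop:smoothparam} writes $\mathbf A$ as a sum of $d+e$ pure tensors (one per square $y_\ell^2$ or $\ell_j^2$), and compute $\kappa_i=\binom{e-1}{i}$ for a single pure tensor. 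That is essentially the same underlying geometry you are gesturing at (the inverse system sits inside the $(d+e)$-dimensional span of squares), but cast in a form where the rank bound is immediate. Finally, your closing suggestion to reroute the argument through the graded Betti number interpretation (Proposition~\ref{prop:bettiKvec}) is not what the paper does here; that machinery is used later, in \S\ref{subsec:boijsod}, to derive interdependencies among the $\kappa_j$, not to establish the bounds in this proposition.
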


\begin{proof}
Throughout this proof we will use the isomorphism $S_1\rightarrow S_1^*$ given by $x_i\mapsto y_i$ so that the compositions $\psi_{j+1}\circ\psi_{j}$ are well defined. This allows us to define a sequence of vector spaces $\mathbb{K}(\mathbf{A}):=(S_1^{*}\otimes\bigwedge^{\bullet}(I_2)^{\perp},\psi_{\bullet})$.

\eqref{prop:genKvec:1}
Since $I$ is generic, we may assume that $I_2^\perp$ contains a quadric $A_e$ of full rank.  Therefore $\kappa_0(I)=d$. Moreover, $\psi_j(\mathbf{A})$ has the block form
\[
\psi_j^s =
\left(
\begin{array}{cc}
-\psi_{j-1}^{s-1} & A_s\otimes Id_e\\
0 & \psi_j^{s-1}\\
\end{array}
\right)
\]
where $\psi_j^{\ell}$ is $\psi_j(A_1,\dots, A_{\ell})$. Since the $\kappa$-vector is independent of the coordinates chosen in $S$ we may assume that $A_e$ equals the identity matrix $I$ so that
 \[
\left(
\begin{array}{cc}
-\psi_{j-1}^{e-1} & A_e\otimes Id_e\\
0 & \psi_j^{e-1}\\
\end{array}
\right)
\left(
\begin{array}{cc}
Id_e & 0\\
\psi_{j-1}^{e-1} & Id_e\\
\end{array}
\right)
=
\left(
\begin{array}{cc}
0 & Id_e\\
\psi_{j}^{e-1}\psi_{j-1}^{e-1} & \psi_{j}^{e-1}\\
\end{array}
\right)
\]
and thus for every $j\geq 1$,
\[
\kappa_j(I)=d{{e-1}\choose{j}}+\rank\left( \psi_{j}^{e-1}\circ\psi_{j-1}^{e-1}\right).
\]
For $j=1$ we distinguish two cases. If $e=3$, then $\psi_1^{e-1}\psi_0^{e-1}$ coincides with the commutator $[A_2,A_1]$. If we choose $A_2$ to be generic antidiagonal and $A_1$ to be generic diagonal, then $[A_2,A_1]$ has rank $d$ (resp. $d-1$) if $d$ is even (resp. odd). On the other hand, if $e\geq 4$, then the composition $\psi_1^{e-1}\psi_0^{e-1}$ is a $d\times {e\choose{2}}d$ block matrix containing  $[A_2,A_1]A_1\wedge A_2+[A_3,A_2]A_2\wedge A_3$.  Choosing $A_2$ and $A_1$ as in the case $e=3$, we see that this matrix has full rank $d$ when $d$ is even, or rank at least $d-1$, when $d$ is odd.  In the odd case, all entries of $[A_2,A_1]$ in the middle row are zero.  For generic $A_3$, the commutator $[A_3,A_2]$ will have nonzero entries in the middle row.  Hence,  $[A_2,A_1]A_1\wedge A_2+[A_3,A_2]A_2\wedge A_3$ has full rank $d$.

\eqref{prop:genKvec:2}  From Proposition~\ref{prop:smoothparam} part \eqref{prop:smoothparam:2}, we see that $(I'_2)^\perp$ contains a quadric of full rank.  Hence $\kappa_0(I')=d$.  Now, let $d\geq \binom{e}{2}$.  Since $I'$ is generic smoothable, Proposition~\ref{prop:ratlmap} implies that we may choose an ideal $J$ of distinct points such that $I'=\initial_{(1,\dots,1)}(J)$.  We will show that
\[\kappa_1(I')\leq (e-1)d+\binom{e}{2}.\]
By symmetry of the $\kappa$-vector, it suffices to show that the above inequality holds for $\kappa_{e-2}$. Lemma~\ref{prop:smoothparam} implies that, after possibly changing coordinates on $S_1$, the subspace $(I')_2^{\perp}$ has a basis $A_1,\dots, A_e$ consisting of matrices $A_i=E_i-D_i$ where $D_i$ is the diagonal matrix with entries $(D_i)_{kk}=a^{(i)}_k$ and $E_i$ is the rank one matrix $\vec{a}_{(i)}{\vec{a}_{(i)}}^{t},$ where $\vec{a}_{(i)}=(a^{(i)}_1,\dots a^{(i)}_d)$. Moreover, $\psi_{e-2}(\mathbf{A})=\psi_{e-2}(\mathbf{E})-\psi_{e-2}(\mathbf{D})$. As a result
\begin{equation}\label{eq:gensmoothable}
\kappa_{e-2}(\mathbf{A})\leq\dim \left({\rm Im}(\psi_{e-2}(\mathbf{D}))+{\rm Im}(\psi_{e-2}(\mathbf{E})) \right)
\end{equation}
\[
\qquad =\kappa_{e-2}(\mathbf{D})+\kappa_{e-2}(\mathbf{E})-{\rm dim (W)}
\]
where $W={\rm Im}(\psi_{e-2}(\mathbf{D}))\cap{\rm Im}(\psi_{e-2}(\mathbf{E}))$. To prove the theorem we will estimate the terms appearing in the right hand side.

First note that $\psi_{e-2}(\mathbf{E})$ is a block matrix of the form
\begin{equation*}
\bordermatrix{
& \widehat{1,2}  & \widehat{1,3} & \dots & \widehat{e-1,e} \cr
\widehat{1} & \pm E_2 & \pm E_3 & \dots & \vdots \cr
\widehat{2} & \pm E_1 & 0 & \dots  & \vdots \cr
\widehat{3} & 0 &  \pm E_1& \dots  & \vdots \cr
\vdots & \vdots & \vdots & \dots  & \vdots \cr
\widehat{e} & 0 & 0 & \dots & \pm E_{e-1}
}
\end{equation*}
having ${{e}\choose{2}}$ block columns of rank at most two. Hence $\kappa_{e-2}(\mathbf{E})\leq \min\{de,2{{e}\choose{2}}\}$.
On the other hand the $D_i$ are diagonal matrices and thus $\mathbb{K}(\mathbf{D})$ is isomorphic to the direct sum of $e$ copies of the reduced cohomology chain complex of the $d$-simplex. It follows that $\mathbb{K}(\mathbf{D})$ is exact and moreover, since the $a_i$ are generic, that $\kappa_{e-2}(\mathbf{D})=d(e-1)$.
Now, let $\eta$ be the matrix obtained from $\psi_{e-2}(\mathbf{E})$ by extracting the first $2$ columns from each block of $\psi_{e-2}(\mathbf{E})$. Note that $\eta$ is injective and that ${\rm Im}(\eta)={\rm Im}(\psi_{e-2}(\mathbf{E}))$. By exactness of $\mathbb{K}(\mathbf{D})$, $W$ is isomorphic to the kernel of the composition $\psi_{e-1}(\mathbf{D})\circ \eta$.
This composition is a $d\times 2\binom{e}{2}$ matrix consisting of $\binom{e}{2}$ blocks each of which is a $d\times 2$ matrix of the form
%This composition is a (block) row matrix with ${{e}\choose{2}}$ blocks with $d$ rows and $2$ columns each of the form

\[
\left(D_i\vec{a_{(j)}}{{a_1}^{(j)}}-D_j\vec{a_{(i)} } {a_1}^{(i)},D_i\vec{a_{(j)}}{{a_2}^{(j)}}-D_j\vec{a_{(i)} } {a_2}^{(i)}\right).
\]
Its range lies in the span of the $d\times 2\binom{e}{2}$ matrix of the form $\left(D_i\vec{a_{(j)}}, D_j\vec{a_{(i)}}\right)$. The latter matrix has rank $\min(d,{{e}\choose{2}})$ since $D_i\vec{a_{(j)}}=D_j\vec{a_{(i)}}$. As a result we have
\[
\kappa_{e-2}(\mathbf{E})-\dim_k W\leq \min\{de,2\binom{e}{2}\}-\left(2\binom{e}{2}-\min\{ d,\binom{e}{2} \} \right).
\]
Since we assume that $d\geq \binom{e}{2}$, this simplifies to
\[
\kappa_{e-2}(\mathbf{E})-\dim_k W\leq 2\binom{e}{2}-(2\binom{e}{2}-\binom{e}{2})=\binom{e}{2}
\]
Combining this inequality with \eqref{eq:gensmoothable}, we obtain the upper bound from the proposition.

Now we consider $\kappa_1$ in the case $e=3$. Note that the upper bound given is $2d+3$, but since $\psi_1(\mathbf{A})$ is skew-symmetric, this implies that $\kappa_1(I')\leq 2d+2$.  To verify the desired equality, we produce an example.  Using notation as in Lemma~\ref{lem: inv1}, we specialize to the case $p_{d+1}=(1,\dots,1), p_{d+2}=(1,1,0,\dots,0)$ and $p_{d+3}=(0,\dots,0,1,1)$.  We claim that the first $2d+1$ rows of the corresponding matrix $\psi_1(\mathbf{A})$ are linearly independent.  Since $A_1$ has rank $d$, the first $2d$ rows are linearly independent.  Let $w$ be the vector:
\[
w:=(-d+2)x_d\otimes A_1+(-d+4)x_1\otimes A_3+(d-1)x_1\otimes A_3+\sum_{i=1}^{d-1}x_i\otimes A_1
\]
The vector $w$ belongs to the kernel of the submatrix spanned by the first $2d$ rows of $\psi_1(\mathbf{A})$, but not to the kernel of the first $2d+1$ rows.  Thus $\psi_1(\mathbf{A})$ has rank at least $2d+1$; since it is skew-symmetric, it therefore has rank at least $2d+2$.  This completes the proof for $\kappa_1$.

Finally, we consider the general case of $\kappa_i(I')$.  We think of $(A_1, \dots, A_e)$ as an element of $k^e\otimes k^d\otimes k^d$ via the injection $\Sym_2(k^d)\subseteq k^d\otimes k^d$.  It is clear that Definition~\ref{defn:kappa-vec} could be extended to any $3$-tensor in $k^e\otimes k^d\otimes k^d$.  Further, observe that if $x,x'\in k^e\otimes k^d\otimes k^d$ then $\kappa_i(x+x')\leq \kappa_i(x)+\kappa_i(x')$. Proposition~\ref{prop:smoothparam} part \eqref{prop:smoothparam:2} implies that $\mathbf{A}$ can be written as the sum of $d+e$ pure $3$-tensors in $k^e\otimes k^d\otimes k^d$.
Hence, to prove the inequality for $\kappa_i(I')$, it suffices to compute $\kappa_i(x)$ in the case that $x$ is a pure tensor.  We may express any pure $3$-tensor as the sequence $(A_1, 0, \dots, 0)$ where $A_1$ is a symmetric rank $1$ matrix.  It follows that $\kappa_i(x)=\binom{e-1}{i}$, which proves the claim for $\kappa_i(I')$.

\end{proof}

\begin{remark}
A similar argument as in the proof of Proposition~\ref{prop:genKvec} part \eqref{prop:genKvec:1} shows that, if $e\geq 3$ and $j\leq \frac{e}{2}$, then
\[\kappa_j(I)\geq d\left({{e-1}\choose{j}}+{{e-2}\choose{j-1}}\right)\]
We omit the proof since it will not be used in this paper, and because computer experiments indicate that $\kappa_j$ is considerably larger.
\end{remark}

Now we reinterpret the $\kappa$-vector in terms of the graded Betti numbers of a certain module. Given $I\in \Gr(e,S_2^*)$ with basis $\mathbf{A}=(A_1,\dots, A_e)$, consider the $R:=k[z_1,\dots, z_e]$-graded module $M(\mathbf{A})$ whose graded pieces are  $M_0=S_1$ , $M_1=S_1^*$ and $M_j=0\text{ if } j\not\in \{0,1\}$.  The action $z_i\cdot (u_0+u_1)$ is defined by $A_i(u_0)$. The relationship between the Betti numbers of $M$ and the $\kappa$-vector is summarized in the following proposition. To simplify the formulas we set $\kappa_{-1}=\kappa_e=0$.

\begin{prop}\label{prop:bettiKvec}
For $0\leq i\leq e$ the graded Betti numbers of $M(\mathbf{A})$ satisfy
\[b_{i,s}=\begin{cases}
d{{e}\choose{i}}-\kappa_{i-1}(I)\text{, if $s=i$}\\
d{{e}\choose{i}}-\kappa_{i}(I)\text{, if $s=i+1$}\\
0\text{, else}
\end{cases}
\]
Conversely, the components of the $\kappa$-vector of $I$ can be expressed in terms of the Betti numbers of $M$ as
\[\kappa_j(I)=d{{e}\choose{j}}-b_{j,j+1}(M).\]
\end{prop}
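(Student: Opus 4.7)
The plan is to compute the graded Betti numbers via $b_{i,s}(M) = \dim_k \Tor_i^R(M,k)_s$, using the Koszul resolution of $k$ over $R$ as the resolving complex. Setting $V = k\langle z_1,\dots,z_e\rangle$ with $\deg z_i = 1$, the Koszul complex $K_\bullet$ has terms $K_i = R \otimes_k \bigwedge^i V$ with generators in degree $i$. Tensoring with $M$ over $R$ yields a complex whose $i$-th term is $M \otimes_k \bigwedge^i V$, and whose graded-$s$ piece equals $M_{s-i} \otimes \bigwedge^i V$.

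The key observation is that, because $M$ is concentrated in degrees $0$ and $1$, in each graded degree $s$ the complex $M \otimes_R K_\bullet$ has at most two nonzero terms: $S_1 \otimes \bigwedge^s V$ in homological position $s$ and $S_1^* \otimes \bigwedge^{s-1} V$ in position $s-1$. Identifying $z_i$ with $A_i$, the Koszul differential between them is
$$d(u \otimes A_{j_1} \wedge \cdots \wedge A_{j_s}) \;=\; \sum_{k=1}^s (-1)^{k-1}\, A_{j_k}(u) \otimes A_{j_1} \wedge \cdots \widehat{A_{j_k}} \cdots \wedge A_{j_s}.$$
Since no differential enters position $s$ or leaves position $s-1$ in this degree, I can read off
$$b_{s,s}(M) = d\binom{e}{s} - \rank d, \qquad b_{s-1,s}(M) = d\binom{e}{s-1} - \rank d,$$
and $b_{i,s}(M) = 0$ whenever $s \notin \{i, i+1\}$. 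The boundary cases $s = 0$ and $s = e+1$ match the conventions $\kappa_{-1} = \kappa_e = 0$ because the relevant differential has zero source or target.

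The remaining task is to show $\rank d = \kappa_{s-1}(I)$. Although $d$ and $\psi_{s-1}$ have interchanged source and target, I expect the symmetry of the matrices $A_i$ to make $d$ the transpose of $\psi_{s-1}$ under the natural pairing $S_1 \otimes S_1^* \to k$ and the perfect pairing on $\bigwedge V$ induced by the chosen basis. Concretely, using $\psi_{s-1}(v \otimes F) = \sum_i A_i(v) \otimes (F \wedge A_i)$, a direct computation of $\langle \psi_{s-1}(v \otimes F),\, u \otimes G\rangle$ for $G = A_{j_1} \wedge \cdots \wedge A_{j_s}$ picks out precisely the indices $i \in \{j_1,\dots,j_s\}$ and reproduces $\langle v \otimes F,\, d(u \otimes G)\rangle$ up to a global sign depending only on $s$, provided $A_i^t = A_i$. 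This forces $\rank d = \rank \psi_{s-1} = \kappa_{s-1}(I)$.

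Substituting $\rank d = \kappa_{s-1}$ and reindexing via $i = s$ or $i = s-1$ yields both nonzero cases of the claimed formula, and the converse identity $\kappa_j(I) = d\binom{e}{j} - b_{j,j+1}(M)$ follows by rearrangement. The main obstacle I anticipate is the sign bookkeeping in identifying $d$ with the transpose of $\psi_{s-1}$: a wrong convention there would misalign the $\kappa$-index (e.g.\ matching $\kappa_{e-s}$ instead of $\kappa_{s-1}$), so one needs to verify that the identification is as claimed and not a reflection of it.
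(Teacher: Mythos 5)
Your proof is correct and follows essentially the same route as the paper: both compute $\Tor_i(M,k)_s$ from the Koszul complex on $z_1,\dots,z_e$ tensored with $M$, observe that each graded slice has only two nonzero terms, and read the Betti numbers off a single rank. The one cosmetic difference is in the bookkeeping: the paper writes the Koszul complex with $\bigwedge^{e-i}I_2^\perp$ in homological position $i$ so that the differential is literally $\psi_{e-i}(\mathbf A)$, and then converts to $\kappa_{i-1}$ and $\kappa_i$ via the symmetry $\kappa_j=\kappa_{e-1-j}$ (Lemma~\ref{lem: inv}(3)), whereas you use the standard Koszul indexing and directly identify the differential as the transpose of $\psi_{s-1}(\mathbf A)$ up to a global sign; both reductions rest on the same fact $A_i^t=A_i$, so the content is identical.
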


\begin{proof} Recall that $b_{i,s}(M)={\rm dim}\Tor^i(M,k)_s$~\cite[Prop.\ 1.7]{eis-syzygy}. The right hand side is the $s$-graded piece of the $i$-th homology of the complex $\mathbb{F}:=\mathbb{K}(z_1,\dots, z_e)\otimes_R M$ obtained by tensoring the Koszul complex on $z_1,\dots, z_e$ with the $T$-module $M$. In our case the complex $\mathbb{F}$ is
\[ \dots \rightarrow \bigwedge^{e-i-1}I_2^{\perp}\otimes_k M(-i-1)\rightarrow \bigwedge^{e-i}I_2^{\perp}\otimes_k M(-i) \rightarrow  \bigwedge^{e-i+1}I_2^{\perp}\otimes_k M(-i+1)\rightarrow \dots \]
and in particular the graded component of $\mathbb{F}$ in degree $i$ is the complex:
\[
\text{degree } i: \quad \quad \dots \rightarrow 0\rightarrow \bigwedge^{e-i}I_2^{\perp}\otimes_k S_1\rightarrow  \bigwedge^{e-i+1}I_2^{\perp}\otimes_k S_1^*\rightarrow 0\rightarrow \dots
\]
Similarly, the graded component of $\mathbb{F}$ in degree $i+1$ is:
\[
\text{degree } i+1: \quad \dots \rightarrow 0\rightarrow \bigwedge^{e-i}I_2^{\perp}\otimes_k S_1\rightarrow  \bigwedge^{e-i+1}I_2^{\perp}\otimes_k S_1^*\rightarrow 0\rightarrow \dots
\]
%\[ \bigwedge^{e-i}I_2^{\perp}\otimes_k S_1 \rightarrow \bigwedge^{e-i}I_2^{\perp}\otimes_k S_1^*\rightarrow 0.\]
The differentials of these complexes are $\psi_{e-i}(\mathbf{A})$ and  $\psi_{e-i-1}(\mathbf{A})$ respectively.  The formulas in the proposition then follow from the symmetry of the $\kappa$-vector.
\end{proof}

Using the notation from \S\ref{subsec:Betti}, Proposition~\ref{prop:bettiKvec} may be summarized by writing $\beta(M(\mathbf{A}))$ as
\smallskip
\[
\begin{pmatrix}
d\binom{e}{0} & d\binom{e}{1}-\kappa_0(I) & d\binom{e}{2}-\kappa_1(I) & \dots & d\binom{e}{e}-\kappa_{e-1}(I) \vspace{.2cm} \\
d\binom{e}{0}-\kappa_0(I) & d\binom{e}{1}-\kappa_1(I) & d\binom{e}{2}-\kappa_2(I) & \dots & d\binom{e}{e}
\end{pmatrix}.
\]

%%%%%%%%%%%%%%%%%%%%%%%%%%
%%%%%%%%%%%%%%%%%%%%%%%%%%
\subsection{Boij-S\"oderberg theory and $\kappa$-vectors}\label{subsec:boijsod}
%%%%%%%%%%%%%%%%%%%%%%%%%%
%%%%%%%%%%%%%%%%%%%%%%%%%%
Using Boij-S\"{o}derberg theory, we show that the entries of the $\kappa$-vector are interdependent.   First we review the relevant facts from Boij-S\"{o}derberg theory.  Given a graded $S$-module $M$, let $\mathbb{F}$ be the graded minimal free resolution of $M$. The \defi{graded Betti numbers} of $M$ are the integers $b_{i,j}$ defined by $\mathbb{F}_i=\bigoplus_j S(-j)^{b_{i,j}}$. The \defi{Betti diagram} of $M$, denoted $\beta(M)$, is the matrix
\[
\beta(M):=
\begin{pmatrix}
b_{0,0}& b_{1,1} & \dots & b_{p,p}\\
b_{0,1}& b_{1,2} & \dots & b_{p,p+1}\\
\vdots &\vdots &  & \vdots \\
b_{0,r}& b_{1,1+r} & \dots & b_{p,p+r}
\end{pmatrix}
\]
Boij-S\"oderberg theory provides an algorithm for expressing the Betti diagram of a module as a positive rational combination of simple building blocks called \defi{pure diagrams}.  See the introduction of ~\cite{eis-schrey} for an overview.

Let $m=\min \{ i\geq 0   | \kappa_i<d\binom{e}{i} \}$.  Let $M=M(\mathbf{A})$ be the graded module associated to some basis $\mathbf{A}$ of $I_2^\perp$, as defined in Proposition \ref{prop:bettiKvec}.  The Betti diagram of $M$ then has the following shape.
\[
\bordermatrix{
		& 0 & 1 & \dots & m-1 & m & \dots & e-1-m & e-m& \dots & e-1 \cr
		 & *&*&\dots &*&*&\dots&*&-&\dots &-\cr
		& -&-&\dots &-&*&\dots&*&*&\dots &*
		}
\]
A $*$ represents a nonzero entry, and a $-$ represents a zero.  Observe that $m$ is the smallest integer for which $b_{m,m+1}\neq 0$.

\begin{prop}\label{prop:boijsod}
With notation as above, the graded Betti numbers of $M$ satisfy the following inequalities:
\[
b_{i,i+1}(M)\geq b_{m,m+1}(M)(i+1-m) \frac{(m+1)!(e-m)!}{(i+1)!(e-i)!}
\]
for all $m\leq i \leq \lfloor \frac{e-1}{2} \rfloor$.
Equivalently, the $\kappa$-vector of $I$ satisifes the inequalities:
\[
\kappa_i(I) \leq d\binom{e}{i} - \left(d\binom{e}{m}-\kappa_m(I) \right) (i+1-m) \frac{(m+1)!(e-m)!}{(i+1)!(e-i)!}
\]
for all $m\leq i \leq \lfloor \frac{e-1}{2} \rfloor$.
\end{prop}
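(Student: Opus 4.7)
The plan is to invoke Boij-S\"oderberg theory on the Betti table of $M = M(\mathbf{A})$ and extract the claimed inequality by isolating the contribution of a single pure diagram. The first step is to observe that $M$ has finite length as a graded module over $R = k[z_1, \dots, z_e]$, hence is Cohen-Macaulay of codimension $e$. By the Boij-S\"oderberg decomposition theorem, one has $\beta(M) = \sum_s c_s \pi_s$ for non-negative rational coefficients $c_s$, where each $\pi_s$ is a pure diagram of projective dimension $e$. Since $\beta(M)$ is supported only in rows $0$ and $1$ by Proposition~\ref{prop:bettiKvec}, each degree sequence that appears must have the form $(0, 1, \dots, s-1, s+1, \dots, e+1)$ for some $s \in \{0, 1, \dots, e+1\}$. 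One then applies the Herzog-K\"uhl formulas to compute that the row-$1$ entry of the corresponding $\pi_s$ in column $i \geq s$ equals $\frac{i-s+1}{(i+1)!\,(e-i)!}$.

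Next one exploits the definition of $m$. By Proposition~\ref{prop:bettiKvec}, $b_{i,i+1}(M) = 0$ for all $i < m$. Expanding this as $\sum_{s \leq i} c_s \beta_i(\pi_s)$---a sum of non-negative terms---forces $c_s = 0$ for every $s < m$. Thus $b_{m,m+1}(M) = c_m / \bigl((m+1)!\,(e-m)!\bigr)$, which determines $c_m$ in terms of $b_{m,m+1}(M)$. For any $m \leq i \leq \lfloor(e-1)/2\rfloor$, dropping the non-negative contributions from $\pi_s$ with $m < s \leq i$ yields
\[
b_{i,i+1}(M) \;\geq\; c_m \cdot \frac{i-m+1}{(i+1)!\,(e-i)!} \;=\; b_{m,m+1}(M) \cdot (i+1-m) \cdot \frac{(m+1)!\,(e-m)!}{(i+1)!\,(e-i)!},
\]
which is the first claim. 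The equivalent inequality on $\kappa_i(I)$ then follows by substituting $b_{j,j+1}(M) = d\binom{e}{j}-\kappa_j(I)$ from Proposition~\ref{prop:bettiKvec}.

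The main technical obligations are verifying that $M$ is Cohen-Macaulay of codimension $e$ (so that the Boij-S\"oderberg decomposition uses only pure diagrams of projective dimension exactly $e$) and justifying that the only pure diagrams that can appear must have the two-row shape $\pi_s$ (a consequence of the row-$0$/row-$1$ support of $\beta(M)$). Both are routine but need a clean statement. The restriction $i \leq \lfloor(e-1)/2\rfloor$ is natural in view of the symmetry $\kappa_j = \kappa_{e-j-1}$ from Lemma~\ref{lem: inv}(3), which promotes any bound on the left half of the Betti table to a corresponding bound on the right half.
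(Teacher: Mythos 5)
Your proof is correct and takes essentially the same route as the paper: both write $\beta(M)$ as a Boij--S\"oderberg sum of pure diagrams with degree sequences $\delta_s=(0,\dots,s-1,s+1,\dots,e+1)$, identify the coefficient $c_m$ of $\pi_{\delta_m}$ via $b_{m,m+1}(M)$, and obtain the bound by discarding the nonnegative contributions from the other pure diagrams. Where the two differ is in how they justify that $\beta(M)-c_m\pi_{\delta_m}$ is still nonnegative: the paper appeals to the Eisenbud--Schreyer decomposition algorithm together with the symmetry $b_{i,i}(M)=b_{e-i,e-i+1}(M)$ of the Betti table, asserting that the algorithm subtracts $c_m\pi_{\delta_m}$ last; you instead argue directly from the support condition, showing $c_s=0$ for $s<m$ because $b_{i,i+1}(M)=0$ for $i<m$ forces each nonnegative summand $c_s b_{i,i+1}(\pi_{\delta_s})$ with $s\leq i$ to vanish. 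Your version is somewhat more transparent and avoids invoking symmetry at this step (the paper's use of symmetry here reads as a shortcut rather than a load-bearing argument). You also spell out two preconditions the paper leaves implicit --- that $M$ has finite length, hence is Cohen--Macaulay of codimension $e$, so the decomposition uses diagrams of projective dimension exactly $e$; and that two-row support of $\beta(M)$ confines the degree sequences in the chain to the $\delta_s$ --- which is worthwhile care, although the latter deserves the one extra sentence that any degree sequence lying between the minimal one $\delta_{e-m}$ and the maximal one $\delta_m$ in the termwise partial order is again of the form $\delta_s$.
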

\begin{proof}
This proof uses the terminology from the introduction of~\cite{eis-schrey}.  Let $\delta_m$ be the degree sequence $(0,1,\dots, m-1,m+1, \dots, e+1)\in \mathbb N^{e+1}$ and let $D$ be the unique pure diagram corresponding to $\delta_m$ with $b_{m,m+1}(D)=b_{m,m+1}(M)$.  From the Herzog-K\"uhl equations~\cite[p.\ 2]{eis-schrey} it follows that $D$ is the diagram:
\[
\frac{b_{m,m+1}(M)}{\binom{e+1}{m+1}} \cdot
\left(\begin{smallmatrix}
m\binom{e+1}{0}& \dots & 1\binom{e+1}{m-1}& 0 & \dots&0 & \dots & 0\\
0&\dots & 0 & 1\binom{e+1}{m+1}  & \dots&(i+1-m)\binom{e+1}{i+1}  & \dots & (e+1-m)\binom{e+1}{e+1}
\end{smallmatrix}\right).
%\begin{pmatrix}
%m\binom{e+1}{0}&(m-1)\binom{e+1}{1} & \dots & 1\binom{e+1}{m-1}& 0 & 0 &\dots & 0\\
%0&0&\dots & 0 & 1\binom{e+1}{m+1} & 2\binom{e+1}{m+2} & \dots & \frac{e+1-m}{m} (e+1-m)\binom{e+1}{e+1}
%\end{pmatrix}
\]
Since the Betti diagram of $M$ is symmetric, the Decomposition Algorithm of \cite{eis-schrey} implies that the difference of diagrams $\beta(M)-D$ will be a new diagram consisting entirely of nonnegative entries.  In particular, for every $i\geq m$ we have that:
\begin{align*}
b_{i,i+1}(M)&\geq b_{i,i+1}(D)\\
&=\frac{b_{m,m+1}(M)}{\binom{e+1}{m+1}} \left( (i+1-m) \binom{e+1}{i+1} \right).
\end{align*}
Simplifying the right-hand side proves the first statement.  The second statement then follows by applying Proposition \ref{prop:bettiKvec}.
\end{proof}

It would be interesting to determine all sequences which equal the $\kappa$-vector of some ideal.  The previous proposition shows that many symmetric vectors in $\mathbb N^{e}$ do not occur as the $\kappa$-vector of some ideal.
\begin{example}
Let $d=5$ and $e=5$, and let $I\in \Gr(5,S_2^*)$.  If $I$ is generic then $\kappa(I)=(5,25,50,25,5)$ and the Betti diagram of $M$ is:
$$
\begin{pmatrix}
5&20&25&-&-&-\\
-&-&-&25&20&5
\end{pmatrix}.
$$
Imagine, however, that we choose $I$ so that $\kappa(I)=(5,22,\kappa_3,22,5)$ for some $\kappa_3\leq 50$.  Then the Betti diagram of $M$ looks like:
$$
\begin{pmatrix}
5&20&28&50-\kappa_3&3&-\\
-&3&50-\kappa_3&28&20&5
\end{pmatrix}.
$$
Boij-S\"oderberg theory implies that the entries of the following difference of diagrams must be nonnegative:
$$
\begin{pmatrix}
5&20&28&50-\kappa_3&3&-\\
-&3&50-\kappa_3&28&20&5
\end{pmatrix}
-
\frac{1}{5}\begin{pmatrix}
1&-&-&-&-&-\\
-&15&40&45&24&4
\end{pmatrix}.
$$
Proposition \ref{prop:boijsod} implies that $50-\kappa_3\geq 8$, or that $\kappa_3$ is at most $42$.
\end{example}
%%%%%%%%%
%%%%%%%%%
\section{$\kappa$-cycles}\label{sec:Kcycles}
%%%%%%%%%
%%%%%%%%%
In this section, we use the $\kappa$-vector to define $\GL(S_1)$-equivariant subsets of the grassmanian $\Gr(e,S_2^*)$. These will be used in the proof of Theorem~\ref{thm:genwaldo}.  Recall that the scheme $\Gr(e,S_2^*)$ is equivariant with respect to the $\GL(S_1)$-action on $\Lambda^e \Sym_2(S_1^*)$.  More explicitly, if $\mathbf{A}\in \Gr(e,S_2^*)$ is an $e$-dimensional vector space, then $g\in \GL(S_1)$ acts by $g\cdot \mathbf{A}\mapsto g\mathbf{A}g^t$.

\begin{defn} Let $\vec{s}=(s_0,\dots, s_{e-1})$ be a sequence of positive integers and let $I\in\Gr(e,S_2^*)$. We say that $\kappa(I)\leq \vec{s}$ if $\kappa_i(I)\leq s_i$ for all $i$. The \defi{$\kappa$-cycle} $\Xi(\vec{s})$ is defined as the closed subset of the Grassmannian ${\rm Gr}(e,S_2^*)$ given by
\[
\Xi(\vec{s})=\{I\in \Gr(e,S_2^*): \kappa(I)\leq \vec{s}\}.
\]
\end{defn}

\begin{example}
Let $e=1$.  Then for any $d'\leq d$, the $\kappa$-cycle $\Xi(d')$ corresponds to the determinantal variety of symmetric $d\times d$-matrices of rank $\leq d'$.
\end{example}

Lemma~\ref{lem: inv} implies that each $\kappa$-cycle is equivariant under the $\GL(S_1)$-action.  These $\kappa$-cycles play an important role in describing the intersections between components of Hilbert schemes of points.  More specifically, Proposition~\ref{prop:genKvec} part~\eqref{prop:genKvec:2} shows that every smoothable $(1,d,e)$-ideal belongs to the $\kappa$-cycle $\Xi(d,2d+2,d)$.  This leads us to investigate the geometry of $\kappa$-cycles of the form $\Xi(d,2d+2,d)$.

\begin{defn}
A vector space of quadrics $V\in \Gr(e,S_2^*)$ is \defi{purely singular} if for every $A\in V$, $\rank(A)<d$.
\end{defn}
Note that $\det(A)$ defines a hypersurface in $\Spec k[a_{ij}]=\mathbb A^{\binom{d+1}{2}}$.  Let $P\subseteq \Gr(3,S_2^*)$ be the locus of purely singular vector spaces.  Then $P\subseteq \Gr(3,S_2^*)$ is the Fano variety of $3$-planes through the origin contained in the hypersurface $V(\det(A))$.
\begin{prop}\label{prop:genwaldoKcycles}
Let $\chr k=0$.  The locus $\Xi(d,2d+2,d)- P$ is an irreducible subset of $\Gr(3,S_2^*)$ of codimension $\binom{d-2}{2}$.
\end{prop}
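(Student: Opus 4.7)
The plan is to reduce, via the $\GL(d)$-equivariance of the $\kappa$-cycles (Lemma 4.1(1)), to a statement about pairs of symmetric matrices whose commutator has small rank. Since every $V \in \Gr(3,S_2^*) \setminus P$ contains a symmetric matrix of full rank, and $\GL(d)$ acts transitively on full-rank symmetric forms over an algebraically closed field of characteristic zero (via $A \mapsto gAg^{t}$), we may pass to the slice $\Omega := \{V \in \Gr(3,S_2^*) : \id \in V\}$, itself isomorphic to $\Gr(2, \Sym_2(S_1^*)/\langle \id \rangle)$. For $V = \langle \id, A_1, A_2 \rangle \in \Omega$, the block formula from the proof of Proposition 4.2(1) (specialized to $A_3 = \id$) gives $\kappa_1(V) = 2d + \rank[A_1, A_2]$, while $\kappa_0(V) = \kappa_2(V) = d$ hold automatically. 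Hence $\kappa(V) \leq (d, 2d+2, d)$ is equivalent to $\rank[A_1, A_2] \leq 2$, and $\Omega \cap \Xi(d,2d+2,d)$ is the image of
\[ X := \{(A_1, A_2) \in \Sym_2(S_1^*) \times \Sym_2(S_1^*) : \rank[A_1, A_2] \leq 2\} \]
under the quotient by the six-dimensional group of basis changes in $V$ that fix the line $k \cdot \id$.

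The crux is to show that $X$ is irreducible of codimension $\binom{d-2}{2}$. The commutator defines a morphism $\pi: \Sym_2 \times \Sym_2 \to \mathfrak{so}_d$, and $X = \pi^{-1}(Y)$ where $Y \subset \mathfrak{so}_d$ is the classical determinantal variety of skew matrices of rank $\leq 2$, irreducible of codimension $\binom{d-2}{2}$. To analyze the preimage, I introduce the incidence
\[ \tilde X := \{(A_1, A_2, u, v) \in \Sym_2^2 \times (k^d)^2 : [A_1, A_2] = uv^{t} - vu^{t}\}. \]
Over the open locus where $A_1$ is regular semisimple, the projection $\tilde X \to \Sym_2 \times (k^d)^2$ forgetting $A_2$ is a rank-$d$ affine bundle, since $\mathrm{ad}_{A_1}: \Sym_2 \to \mathfrak{so}_d$ is surjective with kernel equal to the $d$-dimensional centralizer of $A_1$. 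Thus this open locus is irreducible of dimension $\binom{d+1}{2} + 2d + d = d(d+7)/2$. On the other hand, $\tilde X$ is cut out by $\binom{d}{2}$ equations in a space of dimension $d(d+3)$, so every component of $\tilde X$ has dimension at least $d(d+7)/2$. Combined, these imply that $\tilde X$ is irreducible (the regular semisimple open is dense), and $X$ is irreducible as the image of $\tilde X$ under the forgetful projection. The codimension count yields $\codim(X, \Sym_2^2) = d(d+1) - (d(d+7)/2 - 3) = \binom{d-2}{2}$.

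Finally, taking the quotient of $X$ by the six-dimensional reparameterization group preserves irreducibility and codimension, giving that $\Omega \cap \Xi$ is irreducible of codimension $\binom{d-2}{2}$ in $\Omega$. Applying the $\GL(d)$-orbit map $\GL(d) \times (\Omega \cap \Xi) \to \Gr(3,S_2^*) \setminus P$ and a dimension count then shows that $\Xi(d,2d+2,d) \setminus P$ is irreducible of codimension $\binom{d-2}{2}$ in $\Gr(3,S_2^*)$.

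I expect the main obstacle to be ruling out extra components of $\tilde X$ arising from the non-regular-semisimple stratum. This is resolved by the complete intersection dimension count above, which forces every component of $\tilde X$ to have the minimal dimension $d(d+7)/2$; since the regular semisimple open already fills an irreducible component of this dimension, no additional components can appear. The algebraic independence of the defining equations of $\tilde X$ (equivalent to the dominance of $\pi$) can be verified by taking $A_1$ diagonal with distinct entries, which shows $\mathrm{ad}_{A_1}$ already surjects onto $\mathfrak{so}_d$.
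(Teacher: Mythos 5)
Your reduction is essentially the paper's: both arguments use the $\GL(d)$-action to normalize one quadric to the identity, observe (via the block computation behind Proposition 4.2(1)) that for $V=\langle \id,A_1,A_2\rangle$ the condition $\kappa_1\le 2d+2$ is exactly $\rank[A_1,A_2]\le 2$, and then push irreducibility and codimension of the commutator locus $X=\{(A_1,A_2)\in\Sym_2\times\Sym_2 : \rank[A_1,A_2]\le 2\}$ forward to $\Xi(d,2d+2,d)\setminus P$. The difference is in the crux: the paper establishes that $X$ is integral of codimension $\binom{d-2}{2}$ by citing generic perfection of Pfaffian ideals together with the theorem of Brennan--Pinto--Vasconcelos that the entries of $[B,C]$ form a regular sequence in $k[b_{ij},c_{ij}]$; you try to replace this substantial commutative-algebra input by the incidence variety $\widetilde X=\{[A_1,A_2]=uv^t-vu^t\}$ and a Krull-type count.

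That replacement has a genuine gap. Krull's height theorem gives only a \emph{lower} bound: every component of $\widetilde X$ has dimension at least $d(d+3)-\binom{d}{2}=d(d+7)/2$. It does not ``force every component to have the minimal dimension,'' and it does not exclude components lying entirely over the locus where $A_1$ fails to be regular semisimple; your closing sentence is a non sequitur, since such a component need not meet the regular semisimple open set at all, and the lower bound if anything permits it to be large. Note that the fibers of the forgetful projection genuinely jump there: over scalar $A_1$ (and parallel $u,v$) the fiber in $A_2$ is all of $\Sym_2$, and over $\mathbb{C}$ there are also non-diagonalizable symmetric matrices, so ruling out extra components requires a strict upper bound on $\dim\bigl(\widetilde X\cap\{A_1\ \text{not reg.\ ss.}\}\bigr)$, i.e.\ a stratified estimate over the $O_d$-orbit structure of $\Sym_2$ (stratum dimension, corank of $\mathrm{ad}_{A_1}|_{\Sym_2}$, and the dimension of $\{(u,v): uv^t-vu^t\in \im(\mathrm{ad}_{A_1})\}$, where the intersection of the rank-two cone with that linear subspace need not be proper). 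If you carry out such an estimate, you would obtain a more elementary proof of the irreducibility of $X$ than the paper's; as written, however, the irreducibility of $\widetilde X$ (hence of $X$, hence of $\Xi(d,2d+2,d)\setminus P$) is not proved, and this is precisely the point where the paper leans on the regular-sequence theorem. The remaining steps (the slice $\Omega$, the six-dimensional reparametrization group with constant six-dimensional fibers, the final codimension bookkeeping, and the matching upper bound on codimension coming from the Pfaffian description of $\Xi(d,2d+2,d)$ — which you should state explicitly rather than gesture at with ``a dimension count'') are fine or easily completed.
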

\begin{proof}
Let
\[
\mathcal T=\mathbb A^{\binom{d+1}{2}}\times \mathbb A^{\binom{d+1}{2}}
 \]
parametrize pairs $(B,C)$ of symmetric $d\times d$ matrices.  Note that $\mathcal T=\Spec (k[b_{ij}, c_{ij}])$ for $1\leq i \leq j \leq d$, where we think of $b_{ij}$ as the entries of $B$ and $c_{ij}$ as the entries of $C$.  We have a surjective rational map
\[
p: \mathcal T\times GL(S_1) \dashrightarrow \Gr(3,S_2^*)-P
\]
which sends
\[
((B,C),g)\mapsto \text{span}\{gIdg^t, gBg^t, gCg^t\}\in \Gr(3,S_2^*)
\]
Let $X\subseteq \mathcal T$ be the determinantal subscheme defined by $\rank(BC-CB)\leq 2$.

We first claim that $X$ is an integral subscheme of codimension $\binom{d-2}{2}$ in $T$.  If $N$ is a skew-symmetric $d\times d$-matrix of variables over $\mathbb Z[x_{ij}]$ for $1\leq i < j \leq d$, then the ideal $J$ generated by the $(2d+4)\times (2d+4)$-pfaffians of $N$ is generically perfect (cf.\  \cite{kleppe-laksov} or \cite[p.\ 53]{dep} and \cite[Prop.\ 4.1]{bruns}).  Furthermore,~\cite[Thms.\ 3.9 and 3.13]{bruns} show that the same statement holds if we specialize the entries of the matrix to a regular sequence.  Finally, ~\cite[Thm.\ 3.1]{brennan} shows that the entries of the matrix $BC-CB$ are a regular sequence on $ k[b_{ij},c_{ij}]$; it follows that $X$ is an integral subscheme of codimension $\binom{d-2}{2}$.

Let $p'$ be the restriction of $p$ to $X\times GL(S_1)$.  We claim that the map $p': X\times GL(S_1)\dashrightarrow \Gr(e,S_2^*)$ surjects onto the set $\Xi(d,2d+2,d)-P$.  To see this, note that by performing row and column operations on the matrix $\psi_1(Id,B,C)$, it follows that $\kappa_1(Id, B,C)\leq 2d+2$ if and only if the rank of $BC-CB\leq 2$.  This shows that $\Xi(d,2d+2,d)-P$ is irreducible.

By semicontinuity, the dimension of a general fiber of $p'$ is at least the dimension of a general fiber of $p$.  Hence:
\[
\dim(X\times GL(S_1))-\dim \left( \Xi(d,2d+2,d) \setminus P \right) \geq \dim(\mathcal T\times GL(S_1))-\dim(\Gr(e,S_2^*)),
\]
and it follows that
\[
\codim(\Xi(d,2d+2,d)-P,\Gr(e,S_2^*))\geq \codim(X,\mathcal T)=\binom{d-2}{2}
\]
On the other hand, since $\Xi(d,2d+2,d)$ is locally cut out by the $(2d+4)\times (2d+4)$-Pfaffians of a $3d\times 3d$ matrix, the codimension is at most $\binom{d-2}{2}$.  Hence $\Xi(d,2d+2,d)-P$ is an irreducible subset of codimension $\binom{d-2}{2}$, as claimed.
\end{proof}

We now wish to extend the result of the previous lemma from the open set $\Gr(3,S_2^*)\setminus P$ to the whole Grassmanian.  We do this by showing that the codimension of $P$ is sufficiently large.

Let $A=(a_{ij})$, $B=(b_{ij})$ and $C=(c_{ij})$ be three $d\times d$ matrices of indeterminates. Let $u,v,w$ be new indeterminates and let $M:=uA+vB+wC$.  If we specialize $A,B,$ and $C$ to be symmetric matrices, then coefficients in $k[a_{ij}, b_{ij}, c_{ij}]$ of the determinant of $M$ define an ideal $L$ which cuts out the preimage of $P$ under the rational map $\Spec k[a_{ij}, b_{ij}, c_{ij}]\dashrightarrow \Gr(3,S_2^*)$.

In order to produce the desired upper bound for the dimension of $V(L)$, we choose a monomial ordering $\preceq$ and find an ideal $L'\subseteq \initial_{\preceq}(L)$ of high codimension.  We introduce some notation.  Let $\preceq$  be the revlex order determined by any total ordering on the variables such that $c_{ij}\prec b_{k,l} \prec a_{m,n}$ and such that $i+j>k+s$ implies $h_{i,j}\preceq h_{k,s}$ for $h\in\{a,b,c\}$.  Let $\alpha,\beta,\gamma$ be nonnegative integers such that $\alpha+\beta+\gamma=d$.  Let $f_{\alpha,\beta,\gamma}\in k[a_{ij}, b_{ij}, c_{ij}]$ be the coefficient of the monomial $u^{\alpha}v^{\beta}w^{\gamma}$ in the determinant  $D:=\det(M)$.

We say that a sequence $\mathcal S\in \{0,1,2,3\}^d$ is \defi{of type } $(\alpha,\beta,\gamma)$ if it contains $\alpha$ 1's, $\beta$ 2's, and $\gamma$ 3's.  Given a sequence $\mathcal S$ of type $(\alpha,\beta,\gamma)$ we build a $d\times d$ matrix $M_{\mathcal S}$ whose $i$-th column is the $i$-th column of $uA,vB,wC$ or $M$ depending on whether $\mathcal{S}_i$ is $1,2,3$ or $0$ respectively.

\begin{lemma}\label{lem:grobner}
With notation as above, we have:
\begin{enumerate}
\item Among all monomials appearing in the $r\times r$ minors of $A$, the unique $\preceq$-maximal monomial is $m^*:=\prod_{i=1}^{r} a_{i,r+1-i}$.
\item For nonnegative integers $\alpha,\beta,\gamma$,
\[
\frac{\partial^{\alpha+\beta+\gamma}D}{\partial u^\alpha \partial v^\beta \partial w^\gamma} = \sum_{\mathcal S \text{ of type } (\alpha,\beta,\gamma)} \det (M_{\mathcal S}).
\]
\item{ For nonnegative integers $\alpha,\beta,\gamma$ such that $\alpha+\beta+\gamma=d$, the unique maximal monomial appearing in $f_{\alpha,\beta,\gamma}$ is
\[\prod_{i=1}^\gamma c_{i,\gamma+1-i} \prod_{j=1}^\beta b_{\gamma+j,\beta+\gamma+1-j} \prod_{k=1}^\alpha a_{\beta+\gamma+k,\alpha+\beta+\gamma+1-k}.\]}

\end{enumerate}
\end{lemma}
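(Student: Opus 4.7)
The plan is to treat the three parts in sequence. Parts (1) and (3) turn out to be essentially the same extremality argument applied at two levels of refinement, while part (2) is a direct multilinearity computation.

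For (1), I would begin from the fact that every monomial appearing in an $r\times r$ minor of $A$ has the form $\prod_{k=1}^{r} a_{i_k,j_k}$ for some $r$-element row and column subsets $\{i_k\},\{j_k\}\subseteq\{1,\dots,d\}$ and some bijection $\sigma$ between them. Since $\sum_k i_k\geq \binom{r+1}{2}$ and $\sum_k j_k\geq\binom{r+1}{2}$, one has $\sum_k(i_k+j_k)\geq r(r+1)$, and a quick extremality check shows that equality forces $\{i_k\}=\{j_k\}=\{1,\dots,r\}$ together with the antidiagonal pairing $\sigma(i)=r+1-i$. Consequently any monomial distinct from $m^*$ must contain some $a_{i,j}$ with $i+j>r+1$, and such a variable is $\preceq$-smaller than every variable of $m^*$. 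Since revlex on monomials of equal degree compares the smallest variable in the symmetric difference first, this forces the non-antidiagonal monomial to be $\prec m^*$.

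For (2), the plan is to iterate Jacobi's formula $\partial_u\det(M)=\sum_k\det(M^{(k)})$ (replacing the $k$-th column of $M$ by the $k$-th column of $A$), together with the analogous identities for $v,w$. Each derivative either hits a column that still depends on the relevant variable (yielding a column of $A$, $B$, or $C$) or one already frozen; combined with the product-rule combinatorics this produces a sum indexed by sequences $\mathcal S\in\{0,1,2,3\}^d$ of type $(\alpha,\beta,\gamma)$, with $0$'s marking the columns still equal to columns of $M$. Up to the expected normalization, this is the content of the displayed identity.

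For (3), the strategy is to run the argument of (1) three times in succession, stratified by the letter $c$, then $b$, then $a$. A monomial contributing to $f_{\alpha,\beta,\gamma}$ arises from some $\mathcal S\in\{1,2,3\}^d$ of type $(\alpha,\beta,\gamma)$ together with some permutation $\sigma$, contributing the product of entries $h_{\sigma(j),j}$ with $h=a,b,c$ according to $\mathcal S_j=1,2,3$. Because $c\prec b\prec a$ as entire letter classes, any variable that is $\preceq$-smaller than all $c$-variables of $m^*$ is automatically $\preceq$-smaller than every variable of $m^*$; hence rerunning the extremality argument of (1) on the $\gamma$ $c$-columns forces the $c$-variables to occupy the antidiagonal of the $\gamma\times\gamma$ top-left block. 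Once this is pinned down, the remaining rows and columns available for $b$ lie in $\{\gamma+1,\dots,d\}$, and the same argument on the $b$-variables pins them to the antidiagonal of the $\beta\times\beta$ block at rows and columns $\{\gamma+1,\dots,\gamma+\beta\}$; a third iteration forces the $a$-variables onto the antidiagonal of the last $\alpha\times\alpha$ block. The resulting monomial $m^*$ arises from a unique pair $(\mathcal S,\sigma)$, so it does not cancel and appears with nonzero coefficient in $f_{\alpha,\beta,\gamma}$.

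The main obstacle is the bookkeeping in (3), not any deep idea: one has to verify that the global revlex comparison really does reduce to three local comparisons on the $c$-, $b$-, and $a$-blocks. This reduction hinges on the strong separation $c_{ij}\prec b_{kl}\prec a_{mn}$ built into the hypotheses, which guarantees that a deviation in a smaller letter always produces a variable $\prec$-smaller than every variable in $m^*$; because of this global separation, the fine tie-breaking within each stratum of constant $i+j$ is never used, and the uniqueness of $m^*$ holds for every admissible $\preceq$.
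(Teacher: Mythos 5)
Your proposal is correct and follows essentially the same route as the paper: the index-sum extremality argument for (1), differentiating the determinant column by column for (2), and maximizing the $c$-, $b$-, and $a$-parts in that order for (3), justified by the separation $c_{ij}\prec b_{kl}\prec a_{mn}$ together with part (1) applied to the successive diagonal blocks. Your additional remarks (the block reduction once the $c$-part is fixed, and the uniqueness of the realization of $m^*$ so that it cannot cancel) merely make explicit what the paper's proof leaves implicit.
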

\begin{proof}
(1)  A monomial $m$ appears in an $r\times r$ minor of $A$ if and only if there exist subsets $I,I'\subseteq \{1, \dots, d\}$ of cardinality $r$, and a bijection $\sigma\colon I\to I'$, such that $m=\prod_{i\in I} a_{i,\sigma(i)}$.  Among such monomials $m$, we will have $m\prec m^*$ if $m$ contains at least one variable of the form $a_{i,\sigma(i)}$ with $i+\sigma(i)>r+1$.  For every such monomial $m$ we have: $\sum_{i\in I} i+\sigma(i)=\sum_{i\in I} i+\sum_{i'\in I'} i'\geq r(r+1)$.  Hence, if $I\ne\{1, \dots, r\}$ or $I'\ne \{1, \dots, r\}$, then the previous inequality is strict.  In this case, $m$ contains a variable $a_{i,j}$ with $i+j>r+1$.  If on the other hand $I=I'=\{1,\dots, r\}$ and $i+\sigma(i)\leq r+1$ for all $i$, then the bijection $\sigma$ must be $\sigma(i)=r+1-i$.

(2) follows by induction on $\alpha+\beta+\gamma$ by the well known fact that any partial derivative of the determinant of a matrix can be expressed as a sum of determinants of the matrices obtained by taking partial derivatives of the columns one at a time.

(3)  From part (2), it follows that every monomial appearing in $f_{\alpha,\beta,\gamma}$ can be written as
\[
m=\prod_{i\in I_1} a_{i,\sigma(i)}\prod_{j\in I_2} b_{j,\sigma(j)}\prod_{k\in I_3} c_{k,\sigma(k)}
\]
where $I_1, I_2, I_3$ is a set partition of $\{1, \dots, d\}$ with cardinalitites $\alpha, \beta, \gamma$, and where $\sigma$ is a permutation in $S_d$.  Since $\preceq$ is reverse lexicographic, we can maximize parts $c,b$ and $a$ independently and in that order.  The statement then follows by part (1).
\end{proof}

\begin{cor}\label{cor:genwaldoCor}
The $\kappa$-cycle $\Xi(d,2d+2,d)$ is irreducible of codimension $\binom{d-2}{2}$ when $4\leq d\leq 8$.
\end{cor}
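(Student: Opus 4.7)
The plan is to combine Proposition~\ref{prop:genwaldoKcycles} with a codimension estimate for the purely singular locus $P$. Proposition~\ref{prop:genwaldoKcycles} already shows that $\Xi(d,2d+2,d)\setminus P$ is irreducible of codimension $\binom{d-2}{2}$, and the proof of that proposition additionally records that every irreducible component of $\Xi(d,2d+2,d)$ has codimension at most $\binom{d-2}{2}$ (since $\Xi(d,2d+2,d)$ is locally cut out by the $(2d+4)\times(2d+4)$-Pfaffians of a $3d\times 3d$ skew-symmetric matrix). Consequently, if
\[
\codim\bigl(P,\Gr(3,S_2^*)\bigr)>\binom{d-2}{2}
\]
for $4\leq d\leq 8$, then no component of $\Xi(d,2d+2,d)$ can be contained in $P$, every component meets the open set $\Xi(d,2d+2,d)\setminus P$, and the irreducibility of that open set forces $\Xi(d,2d+2,d)$ itself to be irreducible of codimension $\binom{d-2}{2}$.

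The main task is therefore the codimension bound on $P$. First, I would pass to the affine cone: since the rational map from the space of triples of symmetric $d\times d$ matrices to $\Gr(3,S_2^*)$ has $9$-dimensional fibers (the $\GL(3)$-orbits of bases), one has $\codim P=\codim V(L)$ inside $\Spec k[a_{ij},b_{ij},c_{ij}]$. Second, since initial ideals preserve codimension and $V(L')\supseteq V(\initial_\preceq L)$ for any monomial subideal $L'\subseteq\initial_\preceq L$, it suffices to exhibit such an $L'$ with $\codim V(L')>\binom{d-2}{2}$. By Lemma~\ref{lem:grobner}(3), one may simply take
\[
L':=\bigl\langle m_{\alpha,\beta,\gamma}:\alpha+\beta+\gamma=d\bigr\rangle.
\]
Because the codimension of a monomial ideal depends only on its radical, the problem reduces to a minimum-vertex-cover question on the hypergraph whose vertices are $\{a_{ij},b_{ij},c_{ij}\}$ and whose hyperedges are the supports of the $\binom{d+2}{2}$ explicit monomials $m_{\alpha,\beta,\gamma}$.

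The main obstacle is this final combinatorial estimate. For each specific $d$ in $\{4,5,6,7,8\}$ it is a finite computation, and I would run the verification directly in \cite{M2} by entering the monomials $m_{\alpha,\beta,\gamma}$ and asking for $\codim L'$. For intuition, note that the three corner monomials $m_{d,0,0}$, $m_{0,d,0}$, $m_{0,0,d}$ have pairwise disjoint variable supports, which already gives $\codim V(L')\geq 3$ and settles the case $d=4$ by hand; for larger $d$ one would supplement these with additional pairwise variable-disjoint mixed monomials such as $m_{1,1,d-2}$, but a uniform closed-form combinatorial proof appears delicate because distinct partitions $(\alpha,\beta,\gamma)$ share many variable indices in the formulas of Lemma~\ref{lem:grobner}(3). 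A minor technical point is that when an index crosses the matrix diagonal, the symmetry relations $a_{ij}=a_{ji}$ etc.\ collapse squarefree factors in $m_{\alpha,\beta,\gamma}$ into perfect squares; this only shrinks the radical support, and therefore only strengthens the vertex-cover bound.
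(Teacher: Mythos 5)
Your proposal is correct and takes essentially the same route as the paper: it reduces irreducibility to the bound $\codim\bigl(P,\Gr(3,S_2^*)\bigr)>\binom{d-2}{2}$ via Proposition~\ref{prop:genwaldoKcycles} together with the Pfaffian upper bound on the codimension of components, passes to the affine space of triples of symmetric matrices (constant-dimensional fibers), and bounds the codimension of $P$ by that of the monomial ideal generated by the leading monomials of Lemma~\ref{lem:grobner}(3), checked in Macaulay2 for each $4\le d\le 8$. The paper's computation likewise works with the squarefree (radical) supports of these specialized monomials, so your vertex-cover reformulation is just a restatement of the same calculation.
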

\begin{proof}
Since $\Xi(d,2d+2,d)$ is cut out by an ideal generated by the $2d+4\times 2d+4$-Pfaffians of a $3d\times 3d$ skew-symmetric matrix, we have that every component of $\Xi(d,2d+2,d)$ has codimension at most $\binom{d-2}{2}$.  If we can show that $\codim(P,\Gr(3,S_2^*))>\binom{d-2}{2}$, then it will follow from Proposition~\ref{prop:genwaldoKcycles} that $\Xi(d,2d+d,d)$ is irreducible.  Consider the rational map
\[ p: \Spec k[a_{ij}, b_{ij}, c_{ij}] \dashrightarrow \Gr(3,S_2^*)
\]
which sends a triple of symmetric matrices to their span.  Since the fibers of $p$ have constant dimension, we have that $\codim(\overline{p^{-1}(P)})$ equals the codimension of $P$ in the grassmanian.  Specializing Lemma~\ref{lem:grobner} to the case of symmetric matrices, we obtain explicit formulas for producing monomials in the $\preceq$-initial ideal of the ideal defining $\overline{p^{-1}(P)}$.  Implementing these formulas in Macaulay2 yields the following lower bounds for the codimension of $P$:
\[
\begin{array}{|c|c|c|}
\hline
d &\binom{d-2}{2} &\codim(P, \Gr(3,S_2^*))   \\\hline
4 & 1 & \geq 9\\ \hline
5 & 3 & \geq 11 \\ \hline
6 & 6& \geq 13\\ \hline
7 & 10 &\geq 15\\ \hline
8 & 15 &\geq 17\\ \hline
\end{array}
\]
\end{proof}

%
%\begin{remark}
%The above computations show that, when $4\leq d\leq 8$ and $e=3$, the $(2d+4)\times (2d+4)$ Pfaffians defining the condition $\kappa_1\leq 2d+2$ generate a prime ideal sheaf on $\Gr(3,S_2^*)$.  Hence, if the scheme theoretic intersection of $\Gr(3,S_2^*)$ and the smoothable component inside of the Hilbert scheme is reduced, then this Pfaffian ideal would yield the correct scheme structure for the intersection.  It would be interesting to understand whether or not this is the case.
%\end{remark}

%%%%%%%%%%%%%%%%%%%%%%%%%%%
%%%%%%%%%%%%%%%%%%%%%%%%%%%
\section{Proofs of Theorems 1.2, 1.3, and 1.4}\label{sec:proofs}
%%%%%%%%%%%%%%%%%%%%%%%%%%%
%%%%%%%%%%%%%%%%%%%%%%%%%%%
We are now prepared to prove Theorems \ref{thm:1denonsmoothable}, \ref{thm:minexamples} and \ref{thm:genwaldo}.

\begin{proof}[Proof of Theorem~\ref{thm:1denonsmoothable}]
Let $I'$ be a generic smoothable $(1,d,e)$-ideal.  Proposition~\ref{prop:genKvec} part~\eqref{prop:genKvec:2} implies that $\kappa(I')$ satisfies conditions \eqref{eqn:nec:1} and \eqref{eqn:nec:j}.  Since the $\kappa$-vector is lower semicontinuous on $\Gr(e,S_2^*)$, it follows that these conditions are necessary for the smoothability of $I$.
\end{proof}

\begin{example}
The criteria of Theorem~\ref{thm:1denonsmoothable} allows for the explicit construction of nonsmoothable ideals, and the proof of Proposition~\ref{prop:genKvec} suggests a method for constructing examples.  For instance, let $d=15$ and let $q_1=\sum_{i=1}^{15} y_i^2, q_2=\sum_{i=1}^{15} iy_i^2$ and $q_3=\sum_{i=1}^{7} y_iy_{15-i}$.  Let $I$ be the $(1,15,3)$-ideal with $I_2^\perp=\langle q_1, q_2, q_3\rangle$.  Then $\kappa_1(I)=44$ and thus $I$ is nonsmoothable.
\end{example}
\begin{remark}
The bounds from the above theorem for $\kappa_1$ of a smoothable ideal give a partial response to Problem 18.40 of~\cite{cca}.  In particular, let $U\subseteq \Gr(e,S_2^*)$ be some open affine defined by inverting one of the Pl\"ucker coordinates.  Then we may define a map of free modules:
\[
\Psi_1: \wedge^1 (\mathcal O_U)^e \to \wedge^2 (\mathcal O_U)^e
\]
which specializes to $\psi_1(I)$ for any $I\in U$.  Let $f=(e-1)d+\binom{e}{2}$ and let $F$ be any $(f+1)\times (f+1)$-minor of $\Psi_1$.  Note that $F$ vanishes on $R^d_{1+d+e}\cap U$ since $\kappa_1(I)\leq f$ for any $I\in R^d_{1+d+e}\cap U$ by Proposition~\ref{prop:genKvec} part~\eqref{prop:genKvec:2}.  Let $g$ the rational map $(\mathbb A^d)^n\dashrightarrow \Gr(e,S_2^*)$ as in Lemma~\ref{lem:M2}.  The pullback $g^*(F)$ then induces an algebraic relation among the determinants $\Delta_\lambda$ for each $F$.  It would be interesting to give a more invariant description of these relations among the $\Delta_\lambda$, and to give a combinatorial proof of the corresponding algebraic identities.
\end{remark}

\begin{proof}[Proof of Theorem~\ref{thm:minexamples}]
Let $I$ define a minimal degree subscheme of $\mathbb A^d$ which is not smoothable and which cannot be embedded in $\mathbb A^{d-1}$.  We may assume that $S/I$ is local.  If the degree of $I$ is strictly less than $d+3$, then the Hilbert function of its associated graded ring is either $(1,d), (1,d,1), (1,d,1,1)$ or $(1,d,2)$.  Propositions 4.12 and 4.13 of \cite{cartwright} show that all such ideals are smoothable.  Now let $I$ have degree $d+4$.  If the Hilbert function of the associated graded ring of $I$ is not $(1,d,3)$, then it must be either $(1,d,1,1,1)$ or $(1,d,2,1)$.  Propositions 4.12, 4.14 and 4.15 of \cite{cartwright} show that all such ideals are smoothable as well.  Hence it only remains to consider ideals whose associated graded ring has Hilbert function $(1,d,3)$.  Every such ideal is homogeneous.  Theorem~\ref{thm:1denonsmoothable} implies that a generic $(1,d,3)$-ideal is not smoothable for $d\geq 4$.
\end{proof}

\begin{proof}[Proof of Theorem~\ref{thm:genwaldo}]
Let $\mathcal{Z}$ be the locus of smoothable $(1,d,3)$-ideals.  By Theorem~\ref{thm:1denonsmoothable}, $\mathcal{Z}\subseteq \Xi(d,2d+2,d)$. By Lemma~\ref{lem:M2}, when $d\leq 11$ the set $\mathcal{Z}$ of smoothable $(1,d,3)$-ideals has codimension at most ${d-2}\choose{2}$ in $\Gr(3,S_2^*)$. Hence, for $d\leq 11$, the equality $\mathcal{Z}=\Xi(d,2d+2,d)$ holds if this $\kappa$-cycle is irreducible and has codimension ${d-2}\choose{2}$ in $\Gr(3,S_2^*)$. Part (1) then follows by Corollary~\ref{cor:genwaldoCor} and part (2) follows from Proposition~\ref{prop:genwaldoKcycles}.

For the last statement of the theorem, we let $r: \mathbb A^d\times\Gr(3,S_2^*)\to H^d_{d+4}$ act by translations.  Restricting the map $r$ to $\mathbb A^d\times \mathcal Z$ gives an injection into $R^d_{d+4}$, which is not surjective.  Hence the dimension of $\mathcal Z$ is strictly less than $d(d+4)-d=d^2+3d$.  On the other hand, if $d\geq 12$ then $\dim \Xi(d,2d+2,d)\geq d^2+3d$.  It follows that $\mathcal Z \subsetneq \Xi(d,2d+2,d)$.  Thus there exist smoothable and nonsmoothable ideals with $\kappa(I')=(d,2d+2,d)$, and therefore knowledge of the $\kappa$-vector is not sufficient for deciding smoothability when $d\geq 12$.
\end{proof}
\begin{remark}
For $d=9,10,11$, we do not know if the hypothesis that $I_2^\perp$ contains a nonsingular quadric is necessary for the conclusion in part (2) of Theorem~\ref{thm:genwaldo}.
\end{remark}

%%%%%%%%%%%%%%%%%%%%%%%%%%%
%%%%%%%%%%%%%%%%%%%%%%%%%%%
\section{Examples and Proof of Proposition~\ref{prop:codim1}}\label{sec:applications}
%%%%%%%%%%%%%%%%%%%%%%%%%%%
%%%%%%%%%%%%%%%%%%%%%%%%%%%
In this section, we explain how $\kappa$-vectors were used to produce the examples from the introduction. We first show that $\kappa$-vectors provide information about deformations of $0$-schemes beyond smoothability.

We say that an ideal $J$ in $k[x_1,\dots, x_d]$ is a \defi{$(1,d',e)^{+d-d'}$-ideal} if it can be written as $J=J'\cap J''$ where $J''$ is a homogeneous ideal with Hilbert function $(1,d',e)$ and $J'$ is the ideal of a collection of $d-d'$-distinct points, none of which is the origin. We refer to $J''$ as the \defi{$(1,d',e)$-component} of $J$.

%For $d'\leq d$, let $J''\subseteq k[x_1, \dots, x_d]$ be a homogeneous ideal with Hilbert function $(1,d',e)$ and such that $J''$ contains $x_{d'+1}, \dots, x_d$.  Then there exists a unique $(1,d',e)$-ideal $K''\subseteq k[x_1, \dots, x_{d'}]$ such that
%\[
%J''=i(K'')+(x_{d'+1}, \dots, x_d)
%\]
%where $i$ is the natural inclusion.  Observe that $\kappa(J'')=\kappa(K'')$.  Further, by acting with $GL(d)$, we may always assume that any homogeneous $(1,d',e)$-ideal contains $x_{d'+1}, \dots, x_d$ without changing its $\kappa$-vector.
\begin{theorem}\label{thm:hop} Let $I$ be a $(1,d,e)$-ideal in $k[x_1,\dots, x_d]$.
\begin{enumerate}
\item{If $\kappa_0(I)\leq d'$, then $I$ deforms to a $(1,d',e)^{+d-d'}$-ideal whose $(1,d',e)$-component $J''$ satisfies $\kappa(J'')=\kappa(I)$.}
%\item{If $I$ deforms to a $(1,d',e)^{+d-d'}$-ideal, then $\kappa_0(I)\leq d'$. }

\item{If $I$ deforms to a $(1,d',e)^{+d-d'}$-ideal $J$ with $(1,d',e)$-component $J''$, then $\kappa(I)\leq \kappa(J'')$. }
\end{enumerate}

\end{theorem}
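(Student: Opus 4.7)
Since $\kappa_0(I) = d - \dim \bigcap_i \ker A_i \leq d'$, after a $\GL(d)$-change of coordinates (which preserves $\kappa$) we may assume $\langle x_{d'+1},\ldots,x_d\rangle \subseteq \bigcap_i \ker A_i$, equivalently $I_2^\perp \subseteq \langle y_1,\ldots,y_{d'}\rangle_2$. Let $J'' \subseteq k[x_1,\ldots,x_{d'}]$ be the $(1,d',e)$-ideal with $J''_2{}^\perp = I_2^\perp$; a direct check then shows $I = J''\cdot S + (x_{d'+1},\ldots,x_d)\cdot \mathfrak{m}$. I plan to exhibit the desired deformation as $\mathcal{I}_t := \tilde{J}'' \cap I_t'$ in $k[x_1,\ldots,x_d]$, where $\tilde{J}'' := J''\cdot S + (x_{d'+1},\ldots,x_d)$ is the ideal of the $(1,d',e)$-scheme at origin and $I_t' := \bigcap_{j=1}^{d-d'} \mathfrak{m}_{te_{d'+j}}$ is the radical ideal of the $d-d'$ points $te_{d'+j}$. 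For $t \neq 0$ the two subschemes are disjoint, so $\mathcal{I}_t$ is a $(1,d',e)^{+d-d'}$-ideal with component $J''$. A direct computation of generators shows $\mathcal{I}_0 = I$, and since every fiber has colength $1+d+e$, the family is flat. Finally, writing the matrices representing $I_2^\perp$ in block form $A_i = \bigl(\begin{smallmatrix}A_i' & 0 \\ 0 & 0\end{smallmatrix}\bigr)$, the maps $\psi_j(\mathbf{A})$ and $\psi_j(\mathbf{A}')$ have identical ranks, giving $\kappa(I) = \kappa(J'')$.

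\textbf{Part (2).} Given a flat family $\mathcal{J} \to T$ over an irreducible base with special fiber $\mathcal{J}_0 = I$ and generic fiber $J$, I will define a morphism $\mu: T \to \Gr(e, S_2^*)$ by sending each $t$ to the dual of the degree-$2$ graded piece of the local ring $\mathcal{O}_t := \mathcal{O}_{V(\mathcal{J}_t),0}$. At $t=0$: $\mathcal{O}_0 = S/I$, so $\mu(0) = I_2^\perp$. At the generic $t = \eta$: $\mathcal{O}_\eta = S/J''$, so $\mu(\eta) = J''_2{}^\perp$. Although the total local length at origin jumps from $1+d'+e$ at generic $t$ to $1+d+e$ at $t = 0$, the degree-$2$ graded piece $(\mathcal{O}_t)_2 = \mathfrak{m}^2 \mathcal{O}_t / \mathfrak{m}^3 \mathcal{O}_t$ has constant dimension $e$ throughout the family, so $\mu$ is a morphism into $\Gr(e, S_2^*)$. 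Lower semicontinuity of $\kappa$ on $\Gr(e, S_2^*)$ (Lemma~\ref{lem: inv}(2)) then yields $\kappa(I) = \kappa(\mu(0)) \leq \kappa(\mu(\eta)) = \kappa(J'')$.

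\textbf{Main obstacle.} The most delicate step is establishing regularity of $\mu$ in Part (2). When $J$ lies in the locus $U_e$ where the rational map $\pi$ of Proposition~\ref{prop:ratlmap} is defined, $\mu$ coincides with $\pi$ and regularity is immediate. Otherwise---for instance, when an extra point lies in the embedded subspace $V(J''_1)$, causing the Gr\"obner degeneration to have the wrong Hilbert function---one must argue directly from the local $\mathfrak{m}$-primary decomposition that the degree-$2$ piece of the local ring varies regularly, using that $\dim(\mathcal{O}_t)_2 = e$ is constant even as the total local length jumps. In Part (1), the subsidiary technical point is verifying $\mathcal{I}_0 = I$: this requires that $\tilde{J}''$ incorporate any minimal cubic generators of $J''$, which is automatic with the definition $\tilde{J}'' = J''\cdot S + (x_{d'+1},\ldots,x_d)$.
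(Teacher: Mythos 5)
Your proposal captures the right geometric picture for both parts, but it leaves the decisive computations as gaps, and in Part (1) the stated claim is actually false before the gap is filled.

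\textbf{Part (1).} The naive fiber $\mathcal{I}_0 = \tilde{J}'' \cap I_0'$ is \emph{not} $I$. At $t=0$ all $d-d'$ extra points sit at the origin, so $I_0' = \mathfrak{m}$; since $\tilde{J}''\subseteq\mathfrak{m}$ this gives $\mathcal{I}_0 = \tilde{J}''$, which has colength $1+d'+e$, not $1+d+e$. In particular the family $t\mapsto \tilde{J}''\cap I_t'$ fails flatness precisely at $t=0$, contrary to your assertion that every fiber has colength $1+d+e$. What you actually need is the \emph{flat limit} (saturate with respect to $t$), and proving that this limit equals $I$ is the whole content of the argument; the worry about ``incorporating cubic generators of $J''$'' is a red herring and has nothing to do with the real obstruction. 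The paper handles this directly: it fixes $J'$ to be the ideal of $d-d'$ points whose last $d-d'$ coordinates are \emph{linearly independent}, and then for any homogeneous $q\in I$ of degree $\geq 2$ exhibits a linear form $\ell_q$ in $x_{d'+1},\dots,x_d$ with $q+\ell_q\in J = J'\cap J''$. This gives $\initial_{(1,\dots,1)}(J)\supseteq I$, and colength forces equality. Your construction, scaling the points toward the origin, is the same degeneration in disguise, but you never carry out the calculation that pins down the limit ideal, and the choice of the points' coordinates (which matters!) is left unconstrained.

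\textbf{Part (2).} You flag the regularity of $\mu$ as the main obstacle, and indeed that is where the argument stalls: constancy of $\dim(\mathcal{O}_t)_2$ does not by itself give a morphism into $\Gr(e,S_2^*)$ when the total length of $\mathcal{O}_t$ jumps, because the family of local rings at the origin is not flat over $T$. The paper sidesteps this entirely. It works with the rational map $\pi(Q)=\initial_{(1,\dots,1)}(Q)$ on the locus $Z$ of $(1,d',e)^{+d-d'}$-ideals $Q$ with $\kappa(Q'')\leq\kappa(J'')$. For any $Q\in Z$ not containing a linear form, $\initial_{(1,\dots,1)}(Q)$ is a $(1,d,e)$-ideal and one checks $\initial(Q)_2 = Q''_2$ (both are codimension-$e$ in $S_2$ and one containment is clear), so $\kappa(\pi(Q))=\kappa(Q'')\leq\kappa(J'')$ on a dense open of $Z$. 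Since $\pi$ is regular at $I$ with $\pi(I)=I$ and $I\in\overline{Z}$, we get $I\in\overline{\pi(Z)}$, and lower semicontinuity of $\kappa$ on $\Gr(e,S_2^*)$ (which you already have) finishes the proof. No regularity of a globally defined $\mu$ is required, only that $\kappa$ is semicontinuous on the ambient Grassmannian and that $\pi$ is regular on a dense open containing the point of interest.
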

\begin{proof}
Assume that $\kappa_0(I)=d_0\leq d'$. Then, up to $GL(d)$-action, $(I_2)^{\perp}$ is spanned by $e$ quadrics in $k[y_1,\dots, y_{d_0}]$. As a result, the ideal
\[J'':=(x_{d'+1},\dots, x_{d})+I\]
is a $(1,d',e)$-ideal and $\kappa(J'')=\kappa(I)$ since $J''_2=I_2$. Let $J'$ be the ideal of a collection of $d-d'$ points $p_{d'+1},\dots, p_{d}$ in $\mathbb{A}^d$ with the property that their last $d-d'$ coordinates are linearly independent, and define $J:=J'\cap J''$. Our choice of $J'$ ensures that, for any $c_{d'+1},\dots, c_{d}\in k$, there exists a linear form $\ell(x_{d'+1},\dots, x_{d})$ with $\ell(p_{d'+i})=c_{d'+i}$ for all $i$. Hence, for any $q\in I$, there exists a linear form $\ell_q$ such that $q+\ell_q\in J$. Since $J$ contains no linear form, it follows that $\initial_{(1,\dots, 1)}(J)\subseteq I$. Both ideals coincide since they have the same colength.  Thus $I$ deforms to $J$, proving (1).

For part (2), let $Z$ be the subset of the Hilbert scheme consisting of $(1,d',e)^{+d-d'}$-ideals $Q$ whose $(1,d',e)$-component $Q''$ satisfies $\kappa(Q'')\leq \kappa(J'')$. By assumption, $I\in \overline{Z}$. Consider the map $\pi: Z\dashrightarrow \Gr(e,S_2^{*})$ given by $J\mapsto \initial_{(1,\dots,1)}(J)$.
If $Q\in Z$ does not contain a linear form, then we have $\kappa(\pi(Q))=\kappa(Q'')$. Hence $\kappa(\pi(Q))\leq \kappa(J'')$ for all points in $\pi(Z)$.  Since $I\in \overline{\pi(Z)}$, it follows from semicontinuity of the $\kappa$-vector that $\kappa(I)\leq \kappa(J'')$.
\end{proof}

\begin{proof}[Proof of Proposition~\ref{prop:codim1}]
We first prove the proposition in the case that $n=15$ and $d=11$.  Hence, we must produce a subset $Z\subseteq R^{11}_{15}$ of codimension $1$ such that every point of $Z$ is singular in the Hilbert scheme. Consider the action $r: \mathbb{A}^{11}\times H^{11}_{15}\rightarrow H^{11}_{15}$ by translations and let $Y$ be the image of  $\Gr(3,S_2^*)\times \mathbb A^{11}$ in $H^{11}_{15}$ under this action. Note that $Y$ is not contained in the smoothable component.  We define $Z:=R^{11}_{15}\cap Y$. By Lemma~\ref{lem:M2}, it follows that
$$
\dim Z=153 +\dim \mathbb A^{11}=164=\dim R^{11}_{15}-1.
$$
Since $Z$ belongs to the intersection of two components of $H^{11}_{15}$, it follows that every point of $Z$ is singular in the Hilbert scheme.  Note also that every element of $Z$ has a deformation to a smooth $0$-scheme and has a deformation to a nonsmoothable $0$-scheme.

We now let $n\geq 15$ and $d\geq 11$ and define the family $\widetilde{Z}_{n,d}$ where an element of $\widetilde{Z}_{n,d}$ corresponds to a $0$-scheme $\Gamma=\Gamma_1\sqcup \Gamma_2\subseteq \mathbb A^d$ of degree $n$ such that $\Gamma_1$ is abstractly isomorphic to some element of $Z$ and such that $\Gamma_2$ is the union of $n-15$ distinct points.  By \cite[p.\ 4]{artin}, it follows that any abstract deformation of a $0$-scheme can be a lifted to an embedded deformation of the $0$-scheme in any embedding.  We may then conclude that every element of $\widetilde{Z}_{n,d}$ has a deformation to a smooth $0$-scheme and has a deformation to a nonsmoothable $0$-scheme.  In particular, $\widetilde{Z}_{n,d}$ lies on the smoothable component and at least one other component of the Hilbert scheme, and hence every point in $\widetilde{Z}_{n,d}$ is singular in the Hilbert scheme.

It remains to show that $\dim \widetilde{Z}_{n,d}=\dim R^d_n-1=nd-1$.  First, we choose an $11-$dimensional vector subspace $V\subseteq \langle x_1, \dots, x_d\rangle$, and then we choose an ideal $I\subseteq \Sym(V)\cong k[z_1, \dots, z_{12}]$ such that $I$ belongs to the family $Z$ from above and such that $I$ is supported at the origin.  Next, we choose a basis $l_1, \dots, l_{d-11}$ of $\langle x_1, \dots, x_d\rangle / V$ and a basis $p_1, p_2, p_3$ of $\Sym_2(V)/I_2$, and we choose parameters $\lambda_{i,j}\in k$ for $i=1, \dots, d-11$ and $j=1,2,3$.  By considering the ideal generated by $I+\langle l_i+\sum_{j=1}^3 \lambda_{i,j}p_j\rangle$, we have parametrized the possible choices for $\Gamma_1$ supported at the origin.  The dimension of this family is
\[
11(d-11)+\left( \dim(Z)-11\right) + 3(d-11)=14d-1
\]
To parametrize all possible choices for $\Gamma$, may also translate $\Gamma_1$ anywhere in $\mathbb A^d$, and we may choose any generic $n-15$ points in $\mathbb A^d$ for $\Gamma_2$.  This yields:
\[
\dim \widetilde{Z}_{n,d}=(14d-1)+d+(n-15)d=nd-1
\]
as desired.
\end{proof}
%Since the codimension one locus $\widetilde{Z}_{n,d}$ is contracted under the Chow morphism $\pi: H^d_n \to \Sym_n(\mathbb A^d)$, it might be interesting to consider $\widetilde{Z}_{n,d}$ from the perspective of birational geometry.

\begin{proof}[Example~\ref{examples} part~\eqref{ex:family}]
Let $I$ be a generic $(1,d,e)$-ideal and let $I'$ be a generic smoothable $(1,d,e)$-ideal.  If $e>3$ and $d> \binom{e}{2}$, then by Proposition~\ref{prop:genKvec} part~\eqref{prop:genKvec:1}, we conclude that
\[
\kappa_1(I')\leq (e-1)d+\binom{e}{2} <ed=\kappa_1(I).
\]
Since $\kappa_1$ is lower-semicontinuous, it follows that whenever $\kappa_1(I')>(e-1)d+\binom{e}{2}$, the ideal $I'$ is not smoothable.  For $e=3$, there are two cases to consider.  If $d$ is even, then $\kappa_1(I')=2d+3<3d=\kappa_1(I)$ since $d\geq 4$.  If $d$ is odd, then $\kappa_1(I')=2d+3<3d-1=\kappa_1(I)$ since $d\geq 5$.
\end{proof}

\begin{proof}[Example \ref{examples} part \eqref{ex:intersect}]

We wish to show that the Hilbert scheme of $11$ points in $\mathbb A^7$ has two components whose intersection is {\em not} contained in the smoothable component.  Let $Y_1$ be the irreducible component of $H^7_{11}$ containing the set of $(1,6,3)^{+1}$ ideals. We claim that the dimension of $Y_1$ is at most $77$ and that $Y_1$ is not the smoothable component.

Consider the $(1,6,3)$-ideal $J''\subseteq \mathbb Q[x_1, \dots, x_6]$ defined by
\[
(J''_2)^\perp =\langle y_1^2+y_2^2+\dots +y_6^2, y_1^2+2y_2^2+3y_3^2+5y_4^2+7y_5^2+11y_6^2, y_1y_6+y_2y_5+y_3y_4 \rangle.
\]
Let $J'$ be the ideal of the point $(0,0,0,0,0,0,1)$ in $\mathbb A^7$ and let $J:= \left( J''+(x_7)\right) \cap J'$ in $\mathbb Q[x_1, \dots, x_7]$.  Note that $J$ is a $(1,6,3)^{+1}$-ideal.  Using Macaulay2~\cite{M2}, we compute that the tangent space dimension of $J$ is $77$.  Hence, the component $Y_1$  has dimension at most $77$.  Since $\kappa(J'')=(6,18,6)$, it follows from Theorem~\ref{thm:1denonsmoothable} and \cite[p.\ 4]{artin} that $J$ is not smoothable, and thus that $Y_1$ is not the radical component.

Let $Y_2$ be the irreducible component containing the set of $(1,7,3)$-ideals.   We claim that $Y_2$ is neither $Y_1$ nor the smoothable component.  This follows immediately from the fact that
\[
\dim Y_2 \geq \dim \Gr(3,S_2^*)\times \mathbb A^7=82>77=\dim R^{7}_{11}\geq \dim Y_1.
\]

We now show that the ideal $I:=\initial_{(1,\dots,1)}(J)$ is a nonsmoothable ideal which belongs to $Y_1\cap Y_2$. Note that $I$ belongs to $Y_1$ because it is a degeneration of $J$, and that $I$ belongs to $Y_2$ since it is a $(1,7,3)$-ideal.  The proof of Theorem~\ref{thm:hop} implies that $\kappa(I)=\kappa(J'')=(6,18,6)$.  Since $18>2\cdot 7+2$, Theorem~\ref{thm:1denonsmoothable} implies that $I$ is not smoothable.
\end{proof}

\begin{proof}[Example~\ref{examples} part \eqref{ex:10pts}]
We must show that some deformations of $I$ are determined by the $\kappa$-vector according to the following table:

\begin{center}
\begin{tabular}{| l | c | }\hline
$I$ deforms into a \dots & if and only if \dots \\ \hline
union of $9$ points &$\kappa(I)\leq (5,12,5)$\\ \hline
$(1,4,3)^{+1}$-ideal & $\kappa(I) \leq (4,12,4)$ \\ \hline
{\em smoothable} $(1,4,3)^{+1}$-ideal & $\kappa(I)\leq (4,10,4)$\\ \hline
\end{tabular}
\end{center}

\noindent The first line of the table follows from Theorem~\ref{thm:genwaldo}.  The second line of the table follows from Theorem~\ref{thm:hop}.  We now consider the last line of the table.  By \cite[p.\ 4]{artin} and Theorem~\ref{thm:genwaldo}, a $(1,4,3)^{+1}$ ideal $J$ is smoothable if and only if its $(1,4,3)$-component $J''$ has $\kappa(J'')\leq (4,10,4)$.  The last line of the table then follows from Theorem~\ref{thm:hop}.
\end{proof}

\begin{remark}[Generalized $\kappa$-vector]
The notion of $\kappa$-vector can be extended to subspaces of polynomials of any degree.  In particular, let $V\in \Gr(e,S_m^*)$ be a subspace with basis $\mathbf{f}=\langle f_1, \dots, f_e\rangle$.  Define the $\kappa$-matrix $\kappa^{\text{mat}}(V):=(\kappa_{i,j}(V))$, where $\kappa_{i,j}(V)$ is the rank of the linear map:
\[ \xymatrix{
S_i\otimes \bigwedge^{j}V\ar[r]^{\wedge \mathbf{f} \quad} & S_{i-m}\otimes \bigwedge^{j+1}V\\
}
\]
and $S_{k}$ is defined to be $S_{-k}^*$ if $k<0$.  It would be interesting to know if this generalized numerical invariant induces further nontrivial obstructions for deformations of homogeneous ideals.
\end{remark}
%%%%%%%%%%%%%%%%%%%%%%%%%%%
%%%%%%%%%%%%%%%%%%%%%%%%%%%
\section*{Acknowledgements}  We thank Dustin Cartwright and Bianca
Viray for useful conversations, and for many suggestions which influenced this paper.  In addition, we thank Bernd Sturmfels for stimulating our interest in the subject, and we thank David Eisenbud for his support throughout our work on this project.  We also thank Mats Boij for an illuminating conversation about inverse systems. We thank Anthony Iarrobino and Kyungyong Lee for comments on an earlier draft.  We thank Dan Grayson and Mike Stillman, the makers of Macaulay2, which was very useful at all stages of our work on this project.  Finally, we thank the reviewer for a number of helpful suggestions.
%%%%%%%%%%%%%%%%%%%%%%%%%%%
%%%%%%%%%%%%%%%%%%%%%%%%%%%

\end{document}